\documentclass[a4paper,11pt]{article}
\usepackage[latin1]{inputenc}
\usepackage[english]{babel}
\usepackage{amsmath}
\usepackage{amsfonts}
\usepackage{amssymb}
\usepackage{epsfig}
\usepackage{amsopn}
\usepackage{amsthm}
\usepackage{color}
\usepackage{graphicx}
\usepackage{enumerate}
\setlength{\oddsidemargin}{0.25in} \addtolength{\hoffset}{0cm}
\addtolength{\textwidth}{2.5cm} \addtolength{\voffset}{-1cm}
\addtolength{\textheight}{1cm}
\newtheorem{theorem}{Theorem}[section]
\newtheorem{corollary}[theorem]{Corollary}
\newtheorem{lemma}[theorem]{Lemma}
\newtheorem{proposition}[theorem]{Proposition}
\newtheorem{definition}[theorem]{Definition}

\newtheorem*{theorem*}{Theorem}
\newtheorem*{lemma*}{Lemma}
\newtheorem*{remark*}{Remark}
\newtheorem*{definition*}{Definition}
\newtheorem*{proposition*}{Proposition}
\newtheorem*{corollary*}{Corollary}
\numberwithin{equation}{section}
%

\newcommand{\real}{\mathbb{R}}



\let\ced=\c         

\def\a{\alpha}

\def\e{\varepsilon}        


\def\u{\upsilon}

\def\U{\Upsilon}


\def\cd{{\cal D}}


\def\qed{\,\unskip\kern 6pt \penalty 500
\raise -2pt\hbox{\vrule \vbox to8pt{\hrule width 6pt
\vfill\hrule}\vrule}\par}
\definecolor{darkblue}{rgb}{0.05, .05, .65}
\definecolor{darkgreen}{rgb}{0.1, .65, .1}
\definecolor{darkred}{rgb}{0.8,0,0}
\newcommand{\beqn}{\begin{equation}}
\newcommand{\eeqn}{\end{equation}}
\newcommand{\bear}{\begin{eqnarray}}
\newcommand{\eear}{\end{eqnarray}}
\newcommand{\bean}{\begin{eqnarray*}}
\newcommand{\eean}{\end{eqnarray*}}
%


\begin{document}

\title{\huge \bf Asymptotic behavior for the critical nonhomogeneous porous medium equation in low dimensions}

\author{
\Large Razvan Gabriel Iagar\,\footnote{Instituto de Ciencias
Matem\'aticas (ICMAT), Nicol\'as Cabrera 13-15, Campus de
Cantoblanco, 28049, Madrid, Spain, \textit{e-mail:}
razvan.iagar@icmat.es}, \footnote{Institute of Mathematics of the
Romanian Academy, P.O. Box 1-764, RO-014700, Bucharest, Romania.}
\\[4pt] \Large Ariel S\'{a}nchez,\footnote{Departamento de Matem\'{a}tica
Aplicada, Ciencia e Ingenieria de los Materiales y Tecnologia
Electr\'onica, Universidad Rey Juan Carlos, M\'{o}stoles,
28933, Madrid, Spain, \textit{e-mail:} ariel.sanchez@urjc.es}\\
[4pt] }
\date{}
\maketitle

\begin{abstract}
We deal with the large time behavior for a porous medium equation
posed in nonhomogeneous media with singular critical density
$$
|x|^{-2}\partial_tu(x,t)=\Delta u^m(x,t), \quad (x,t)\in
\real^N\times(0,\infty), \ m\geq1,
$$
posed in dimensions $N=1$ and $N=2$, which are also interesting in
applied models according to works by Kamin and Rosenau. We deal with
the Cauchy problem with bounded and continuous initial data $u_0$.
We show that in dimension $N=2$, the asymptotic profiles are
self-similar solutions that vary depending on whether $u_0(0)=0$ or
$u_0(0)=K\in(0,\infty)$. In dimension $N=1$, things are strikingly
different, and we find new asymptotic profiles of an unusual mixture
between self-similar and traveling wave forms. We thus complete the
study performed in previous recent works for the bigger dimensions
$N\geq3$.
\end{abstract}

\

\noindent {\bf AMS Subject Classification 2010:} 35B33, 35B40,
35K10, 35K67, 35Q79.

\smallskip

\noindent {\bf Keywords and phrases:} porous medium equation,
non-homogeneous media, singular density, asymptotic behavior,
radially symmetric solutions, nonlinear diffusion.

\section{Introduction}

The goal of this work is to study the asymptotic behavior of
solutions to the following nonhomogeneous porous medium equation
with critical singular density
\begin{equation}\label{eq1} |x|^{-2}\partial_{t}u(x,t)=\Delta
u^m(x,t), \quad (x,t)\in\real^{N}\times(0,\infty),
\end{equation}
where $m\geq1$ and we restrict ourselves to low dimensions $N=1$ and
$N=2$. This completes the panorama of the large time behavior of
solutions to Eq. \eqref{eq1}, already studied in dimensions $N\geq3$
in previous recent works by the authors \cite{IS13, IS14}. We will
work with the Cauchy problem for initial data satisfying
\begin{equation}\label{id}
u(x,0)=u_0(x)\in L^{\infty}(\real^N)\cap C(\real^N), \ u_0\geq0, \
u_0\not\equiv0, \quad {\rm for \ any} \ x\in\real^N.
\end{equation}
Moreover, unless we explicitly state the contrary, we will always
consider \emph{radially symmetric} initial conditions, that is,
$u_0(x)=u_0(r)$, $r=|x|$.

Equations such as
\begin{equation}\label{eq2}
\varrho(x)\partial_{t}u(x,t)=\Delta u^m(x,t), \quad
(x,t)\in\real^{N}\times(0,\infty),
\end{equation}
where $\varrho$ is a density function with suitable behavior, have
been proposed by Kamin and Rosenau in several classical papers
\cite{KR81, KR82, KR83} as a model for thermal propagation by
radiation in non-homogeneous plasma. Afterwards, a big development
of the mathematical theory associated to Eq. \eqref{eq2} begun,
assuming that the density satisfies
$$
\varrho(x)\sim|x|^{-\gamma}, \quad \hbox{as} \ |x|\to\infty,
$$
for some $\gamma>0$, as for example in the following papers
\cite{E90, EK94, GKKV, KT04, RV06, Ted, RV08, RV09, KRV10, NR} where
its qualitative properties and asymptotic behavior are studied. In
particular, some of these works were also considering purely
singular densities such as $\varrho(x)=|x|^{-\gamma}$, and proving
that asymptotic profiles for \eqref{eq2} come from explicit
solutions to equations with singular density. Moreover, it came out
that the density $\varrho(x)=|x|^{-2}$ (or $\varrho(x)\sim|x|^{-2}$
as $|x|\to\infty$) is critical for both the qualitative properties
(well-posedness, regularity) and large time behavior: indeed, for
$\gamma\in(0,2)$, properties do not depart from those already well
investigated of the porous medium equation $u_t=\Delta u^m$, while
for $\gamma>2$, they are different.

Restricting ourselves to the singular case
$\varrho(x)=|x|^{-\gamma}$, its theory developed later, due to the
difficulties involved by the presence of the singular coefficient.
Some results on qualitative behavior were established in
\cite{KT04}, then later in \cite[Section 6]{KRV10} and \cite[Section
3]{NR}, the latter two using weighted spaces for well-posedness, and
restricting the study to dimensions $N\geq3$. On the other hand,
formal transformations and explicit solutions were established in
\cite{GKKV, IRS}, where it becomes clear why $\gamma=2$ is a
critical exponent. Restricting to \eqref{eq1}, the large time
behavior in dimensions $N\geq3$ has been established recently by the
authors \cite{IS13, IS14}, and the results were quite striking: the
presence of both critical density $\gamma=2$ and the singularity at
$x=0$ led to many new and unexpected mathematical phenomena, that in
the general, non-critical and/or non-singular case do not happen. We
recently learned that also the nonhomogeneous equation for the
fractional porous medium
\begin{equation*}
\varrho(x)\partial_tu=(-\Delta)^s(u^m), \quad m>1, \ s\in(0,1),
\end{equation*}
has been proposed in \cite{Grillo15}, where the qualitative
properties and large time behavior are studied for suitable
$\varrho(x)$, but again excluding the critical density (that in the
fractional case holds for $\gamma=2s$).

\subsection{Main results}

We are now ready to state the main results of this paper. As already
explained, we deal with radially symmetric solutions $u(x,t)$ to Eq.
\eqref{eq1} having initial condition $u_0$ as in \eqref{id}. We
refer the reader to Section \ref{sec.prel} for the well-posedness of
the Cauchy problem \eqref{eq1}-\eqref{id} in our framework, and also
to some changes of function and variable converting \eqref{eq1} into
other nonlinear diffusion equations. As also seen in our precedent
works \cite{IS13, IS14}, an important difference in the asymptotic
behavior will appear with respect to the value of $u_0$ at the
origin: profiles are very different when $u_0(0)=0$ or $u_0(0)=K$.

But the {\bf main motivation} for us to write this paper is that the
large time behavior \emph{differs according to the dimension} $N=1$
and $N=2$ and in some cases \emph{strikingly departs} from the
parallel results in dimension $N\geq3$ given in \cite{IS14}. We
state in the sequel the results corresponding to $m>1$, as the case
$m=1$ is easy and is reduced to a comment in Section
\ref{sec.final}.

\medskip

\noindent \textbf{Large time behavior in dimension $N=2$.} The
asymptotic profiles to Eq. \eqref{eq1} with $m>1$ and in dimension
$N=2$ are of self-similar type, but with logarithmic variables. This
is explained by the fact that there exists a one-to-one
correspondence between radially symmetric solutions to \eqref{eq1}
and solutions to the standard porous medium equation
\begin{equation}\label{PME}
\partial_tw(s,t)=\partial_{ss}(w^m)(s,t), \qquad
(s,t)\in\real\times(0,\infty),
\end{equation}
in one dimension. We next state the precise results.
\begin{theorem}\label{th.conv1}
Let $u$ be a solution to \eqref{eq1} with $m>1$ and posed in
dimension $N=2$, with radially symmetric initial condition $u_0$,
satisfying \eqref{id} and moreover the following integral condition
(in radial variables)
\begin{equation}\label{int.cond}
u_0(0)=0, \qquad \int_0^{\infty}\frac{u_0(r)}{r}\,dr<\infty.
\end{equation}
Then we have
\begin{equation}\label{conv11}
\lim\limits_{t\to\infty}t^{\a}\|u(\cdot,t)-U_0(\cdot,t)\|_{\infty}=0,
\qquad \a=\frac{1}{m+1},
\end{equation}
where
\begin{equation}\label{profile11}
U_0(x,t)=t^{-\a}\left[C_0-k\left(\frac{\log\,|x|}{t^{\a}}\right)^2\right]_+^{1/(m-1)},
\qquad k=\frac{m-1}{2(m+1)},
\end{equation}
and $C_0=C(u_0)$ is uniquely determined by the initial condition
$u_0$.
\end{theorem}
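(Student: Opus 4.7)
The plan is to exploit the logarithmic change of variables alluded to just before the theorem, which converts radial solutions of \eqref{eq1} in dimension $N=2$ into solutions of the one-dimensional porous medium equation \eqref{PME}, and then invoke the classical Kamin--V\'azquez asymptotic result on $\real$.

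Concretely, for a radial solution $u=u(r,t)$, $r=|x|$, set $s:=\log r$ and $w(s,t):=u(e^s,t)$. Since $r\partial_r=\partial_s$ yields $r^2\partial_{rr}+r\partial_r=\partial_{ss}$, the radial form $|x|^{-2}\partial_t u=\partial_{rr}u^m+r^{-1}\partial_r u^m$ of \eqref{eq1} becomes, upon multiplication by $r^2$, exactly
\[
\partial_t w \;=\; \partial_{ss}(w^m)\qquad \text{on } \real\times(0,\infty).
\]
Using $dr/r=ds$, condition \eqref{int.cond} translates into
\[
\int_{\real} w_0(s)\,ds \;=\; \int_0^{\infty}\frac{u_0(r)}{r}\,dr \;<\;\infty,
\]
so that $w_0(s)=u_0(e^s)$ belongs to $L^1(\real)\cap L^{\infty}(\real)$, is continuous, nonnegative, nontrivial, and $w_0(s)\to 0$ as $s\to-\infty$ because $u_0(0)=0$. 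Moreover the change of variables preserves the sup-norm, $\|u(\cdot,t)\|_{\infty}=\|w(\cdot,t)\|_{\infty}$.

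At this stage I appeal to the classical Kamin--V\'azquez result on the large-time behavior of the Cauchy problem for the one-dimensional porous medium equation: for any such $w_0$ of mass $M=\int_{\real}w_0$,
\[
\lim_{t\to\infty}\, t^{\alpha}\bigl\|w(\cdot,t)-\mathcal{B}_M(\cdot,t)\bigr\|_{L^{\infty}(\real)}\;=\;0,\qquad \alpha=\tfrac{1}{m+1},
\]
where $\mathcal{B}_M$ is the one-dimensional Barenblatt--Zel'dovich--Kompaneets profile
\[
\mathcal{B}_M(s,t)\;=\;t^{-\alpha}\Bigl[C_0-k\bigl(s/t^{\alpha}\bigr)^2\Bigr]_+^{1/(m-1)},
\]
with the explicit constant $k$ appearing in \eqref{profile11} and $C_0=C_0(M)$ uniquely determined by the mass constraint $\int_{\real}\mathcal{B}_M(\cdot,t)\,ds=M$. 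Reading this back through $s=\log|x|$ gives $\mathcal{B}_M(\log|x|,t)=U_0(x,t)$, so $C_0$ is uniquely determined by $\int_0^{\infty}u_0(r)/r\,dr$, and the convergence \eqref{conv11} is a literal transcription of the Kamin--V\'azquez convergence above.

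The main point requiring care, and likely to occupy most of the technical work, is the rigorous transfer of solutions through this correspondence: one must verify that the map $u\leftrightarrow w$ is a bijection at the level of the Cauchy problem in the weak/strong framework used to set up \eqref{eq1}, that initial traces are correctly matched, and that the behavior at $s=-\infty$ (which encodes the singular point $x=0$ of the original equation) does not spoil either the hypotheses of the 1D asymptotic result or the reading back of $\mathcal{B}_M$ as $U_0$. All of this should be handled by the well-posedness analysis developed in Section~\ref{sec.prel}; once that correspondence is in place, the proof of Theorem~\ref{th.conv1} reduces to an immediate application of the classical 1D theory.
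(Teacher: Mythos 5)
Your proposal is correct and follows essentially the same route as the paper: the logarithmic change of variables $w(s,t)=u(e^s,t)$ reducing the radial $N=2$ problem to the one-dimensional porous medium equation with $w_0\in L^1(\real)$, followed by the classical uniform convergence (with rate $t^{\alpha}$, $\alpha=1/(m+1)$) to the Barenblatt solution of the same mass, and then undoing the transformation to read the Barenblatt profile as $U_0$.
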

In the case when $u_0(0)=K>0$, the asymptotic profile is
non-explicit but nevertheless it still has self-similar form. More
precisely
\begin{theorem}\label{th.conv2}
Let $u$ be a solution to \eqref{eq1} with $m>1$ and posed in
dimension $N=2$, with radially symmetric initial condition $u_0$,
satisfying \eqref{id} and moreover
$$u_0(0)=K>0, \qquad \lim\limits_{|x|\to\infty}u_0(x)=0.$$
Then there exists a radially symmetric profile
\begin{equation}\label{profile12}
W(x,t)=f\left(\frac{\log\,|x|}{\sqrt{t}}\right), \qquad W(0,t)=K, \
{\rm for \ any} \ t>0,
\end{equation}
such that
\begin{equation}\label{conv12}
\lim\limits_{t\to\infty}\|u(\cdot,t)-W(\cdot,t)\|_{\infty}=0.
\end{equation}
In particular, we also get that $u(0,t)=K$ for any $t>0$, that is,
the value at the origin do not change along the evolution.
\end{theorem}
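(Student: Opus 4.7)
The plan is to reduce the statement to the classical large-time behavior of the one-dimensional porous medium equation \eqref{PME} with step-like initial data, via the logarithmic change of variables $s = \log|x|$. Setting $w(s,t) = u(r,t)$ with $r = |x|$, the radial Laplacian in dimension $N=2$ satisfies $\Delta u^m = r^{-2}\partial_{ss}(u^m)$, so the singular factor $|x|^{-2}$ in \eqref{eq1} exactly cancels and $w$ solves $\partial_tw = \partial_{ss}(w^m)$ on $\real\times(0,\infty)$. The radial initial datum $u_0$ becomes a bounded continuous profile $w_0(s) = u_0(e^s)$ enjoying the step-like behavior $w_0(-\infty) = K$ and $w_0(+\infty) = 0$. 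Theorem \ref{th.conv2} is then equivalent to the uniform convergence of $w(\cdot,t)$ towards a self-similar front solution $f(s/\sqrt{t})$ of \eqref{PME}.

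Next I would construct $f$ by inserting the self-similar ansatz $f(\xi)$, $\xi = s/\sqrt{t}$, into \eqref{PME}, which leads to the ODE
$$(f^m)''(\xi) + \frac{\xi}{2}f'(\xi) = 0, \qquad \xi\in\real,$$
with $f(-\infty) = K$ and $f(+\infty) = 0$. Existence, uniqueness and monotonicity of such a profile, together with the fact that for $m>1$ its support is of the form $(-\infty,\xi_0]$ for some $\xi_0 > 0$, follow from classical phase-plane analysis of the PME front problem and are the nonlinear analogue of the error-function solution of the heat equation. To establish the limit $w(\cdot,t) \to f(\cdot/\sqrt{t})$ I would use a rescaling-compactness scheme: the parabolically rescaled family $w_\lambda(s,\tau) := w(\lambda s,\lambda^2\tau)$ is again a family of solutions of \eqref{PME}, whose initial data converge, as $\lambda\to\infty$, to the pure step $K\chi_{\{s<0\}}$. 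Aronson--B\'enilan estimates together with local $L^\infty$ bounds give precompactness in the uniform-on-compacts topology, and by uniqueness for the step Cauchy problem any accumulation point must coincide with $f(s/\sqrt{\tau})$, which yields the pointwise claim.

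The main obstacle is upgrading this convergence to \emph{uniform} convergence on all of $\real$. Two points must be handled. First, pinning the value at $s=-\infty$ uniformly in $t$: I would use comparison from above with the constant supersolution $w\equiv K$, together with a compactly supported Barenblatt-type subsolution placed far to the left under $w_0$, to force $\liminf_{s\to-\infty}w(s,t) \geq K$ and hence $\lim_{s\to-\infty}w(s,t) = K$. Second, controlling the right free boundary of $w(\cdot,t)$, which by finite speed of propagation and standard $1$-D PME estimates grows at rate $\xi_0\sqrt{t}$, matching the support of $f(\cdot/\sqrt{t})$; together with the monotonicity properties of $f$ this lets one compare the two supports and conclude uniform closeness. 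Pulling everything back through $s=\log|x|$ yields \eqref{conv12} with $W$ as in \eqref{profile12}, while the identity $u(0,t) = K$ is immediate from the pinning argument above applied at $s=-\infty$.
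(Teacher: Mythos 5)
Your overall route is the same as the paper's: the change $s=\log|x|$ turns \eqref{eq1} in dimension $N=2$ into the one-dimensional PME \eqref{PME} with step-like data $w_0(-\infty)=K$, $w_0(+\infty)=0$; the limit is the non-explicit self-similar front $f(s/\sqrt t)$ (which the paper quotes from Alikakos--Rostamian and V\'azquez rather than rebuilding by phase-plane analysis, and whose attracting character is in fact already in van Duijn--Peletier); and your rescaling--compactness--identification scheme, with the rescaled family converging to the solution with Heaviside trace and uniqueness closing the argument, is exactly the paper's four-step method. Up to this point your plan is sound, and it delivers uniform convergence on sets $\{|s|\le R\sqrt t\}$, i.e. $\{e^{-R\sqrt t}\le |x|\le e^{R\sqrt t}\}$, just as in the paper.

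The genuine gap is in your final step, the upgrade to uniform convergence on the whole space and the pinning $u(0,t)=K$. First, comparison from above with the constant $K$ is not available: Theorem \ref{th.conv2} does not assume $0\le u_0\le K$ (contrast with \eqref{idplus1} in Theorem \ref{th.conv4}), so $K$ need not dominate the data. Second, and more seriously, a compactly supported Barenblatt-type subsolution placed far to the left under $w_0$ cannot force $\liminf_{s\to-\infty}w(s,t)\ge K$: it vanishes identically near $s=-\infty$ and its maximum decays like $t^{-1/(m+1)}$, so it yields only nonnegativity there. To pin the level $K$ from below you need a subsolution that retains the height $K-\varepsilon$ as $s\to-\infty$, e.g.\ a far-left translate of the self-similar front of height $K-\varepsilon$ (this is precisely the device the paper uses in dimension $N=1$ to obtain \eqref{interm20}), or else the paper's own argument for its Claim: if $u(0,t_0)=K_1\ne K$, restart the evolution at $t_0$ and the convergence already established on $\{|s|\le R\sqrt t\}$ would force closeness to the two essentially different profiles $W_K$ and $W_{K_1}$, a contradiction. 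The paper then passes from the expanding sets to all of $\real^2$ using radial monotonicity (Lemma \ref{lem.prel}) and comparison. Also note that your control of the right free boundary presumes compactly supported data, while the theorem only assumes $u_0(x)\to0$ as $|x|\to\infty$; in the outer region $s\ge R\sqrt t$ one should instead compare with a taller front (or step) to get smallness, rather than invoke finite speed of propagation of $w$ itself.
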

\noindent \textbf{Remarks. 1.} Let us notice that, while Theorem
\ref{th.conv1} requires some control of the tail of $u_0$ (via
\eqref{int.cond}), in Theorem \ref{th.conv2} there is no need for
such control, only that $u_0(r)\to0$ as $r\to\infty$. This is
explained by the fact that, when $u_0(0)=K>0$, there is no time
decay rate $t^{-\a}$ and thus, the global large time behavior is
dominated by the \emph{inner behavior}, that is, in dynamic regions
close to $x=0$.

\noindent \textbf{2.} The profile $W$ (or $f$) in \eqref{profile12}
comes from a special self-similar solution to \eqref{PME} which is
not explicit, but it was introduced via a scaling process in
\cite{AR84, V84}. More details about it in Subsection
\ref{subsec.N2K} below, where its properties are recalled.

\noindent \textbf{3.} Results in Theorem \ref{th.conv2} remind of
those for the similar equation in dimension $N\geq3$, see
\cite[Theorem 1.6]{IS14}, although the profile there is explicit.
Meanwhile, Theorem \ref{th.conv1} departs strongly from the results
in dimensions $N\geq3$, \cite[Theorem 1.4]{IS14}, as there the
profiles were coming via an \emph{asymptotic simplification} to a
conservation law leading to \emph{peak-form} profiles, while in the
present case we have a regular, self-similar type function.

\medskip

\noindent \textbf{Large time behavior in dimension $N=1$.} In this
case, things are surprisingly different, although in appearance the
equation is the same. The is understood via some natural change of
variable indicated at the end of Section \ref{sec.prel}, leading to
a porous medium equation with convection:
\begin{equation}\label{PMEconv}
\partial_tw(s,t)=\partial_{ss}(w^m)(s,t)-\partial_{s}(w^m)(s,t),
\qquad (s,t)\in\real\times(0,\infty),
\end{equation}
whose behavior (of traveling waves type) is essentially different
from the one for the standard porous medium equation \eqref{PME}.
There is once more a difference on whether $u_0(0)=0$ or $u_0(0)>0$.

The case of initial conditions $u_0$ with $u_0(0)=0$ is largely
similar to the equivalent case in \cite{IS14}. As there, we will get
an asymptotic profile presenting a \emph{peak-type discontinuity},
thus, the uniform convergence will be replaced by a slightly weaker
concept, the \textbf{convergence in the sense of graphs} for
multivalued functions, which allows us to deal with jump
discontinuities. For the sake of completeness, we recall here this
concept (following \cite{EVZ93, IS14}). Let $f$,
$g:D\subseteq\real\mapsto2^{\real}$ be two multivalued functions. We
define the \emph{distance between their graphs} as
$$
d_g(f(x),g(x))=\inf\{|y-z|:y\in f(x), \ z\in g(x)\}, \qquad {\rm for
\ any } \ x\in D.
$$
Let $\{f_k\}:D\subseteq\real\mapsto2^{\real}$ be a sequence of
multivalued functions and $F:D\subseteq\real\mapsto2^{\real}$. We
say that ${f_k}$ \emph{converges in the sense of graphs} to $F$ if
for any $\e>0$, there exists $k(\e)>0$ sufficiently large such that
$$
d_g(f_k(x),F(x))\leq\e, \qquad {\rm for \ any} \ k\geq k(\e), \ x\in
D.
$$
Notice that, in the standard case of univalued functions, this
notion reduces to the usual uniform convergence. In the case of a
function $F$ having a jump discontinuity at $x_0\in D$ (as it will
be our case below), letting
$$
l_{-}:=\lim\limits_{x\to x_0^{-}}F(x)<\lim\limits_{x\to
x_0^{+}}F(x)=l_{+},
$$
we will work with $F(x_0)=[l_{-},l_{+}]$. We define, for a
measurable function $h$, the norm
$$
\|h\|_{p,1}:=\left(\int_{\real}\frac{|h(x)|^p}{|x|}\,dx\right)^{1/p}.
$$
With these definitions, we are now ready to state our convergence
result.
\begin{theorem}\label{th.conv3}
Let $u$ be a solution to \eqref{eq1} with $m>1$, posed in dimension
$N=1$, with radially symmetric initial condition $u_0$ satisfying
\eqref{id} and moreover
\begin{equation}\label{idminus}
u_0(0)=0, \qquad M_0:=\int_{0}^{\infty}\frac{u_0(x)}{x}\,dx<\infty.
\end{equation}
Then:

\noindent (a) For any $p\in[1,\infty)$, we have
\begin{equation}\label{pconv}
\lim\limits_{t\to\infty}t^{(p-1)/mp}\|u(\cdot,t)-F(\cdot,t)\|_{p,1}=0,
\end{equation}
where
\begin{equation}\label{profile21}
F(x,t)=\left\{\begin{array}{ll}t^{-1/m}\left[\frac{1}{m}t^{-1/m}\log\,|x|\right]_+^{1/(m-1)},
& {\rm for} \ 0<|x|<e^{kt^{1/m}},\\ 0, & {\rm for} \ |x|\geq
e^{kt^{1/m}}.\end{array}\right.
\end{equation}
and $k=k(M_0)$ depends only on $M_0$.

\noindent (b) Introducing self-similar changes of variable and
function
\begin{equation}\label{ssvar}
\overline{u}(y,t)=t^{1/m}u(x,t), \qquad
\overline{F}(y)=t^{1/m}F(y,t)=\left[\frac{y}{m}\right]^{1/(m-1)}, \
y=t^{-1/m}\log\,|x|,
\end{equation}
we have
\begin{equation}\label{conv21}
\overline{u}(y,t)\longrightarrow\overline{F}(y), \qquad {\rm as} \
t\to\infty,
\end{equation}
the convergence being in the sense of graphs.
\end{theorem}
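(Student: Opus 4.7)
My plan is to reduce the problem to the large-time asymptotics of the one-dimensional porous medium equation with convection \eqref{PMEconv}, for which the convergence to the unique entropy solution of the limiting scalar conservation law is a classical result of Escobedo, V\'azquez and Zuazua \cite{EVZ93}. The substitution $s = \log r$ together with $w(s,t) := u(r,t)$, announced at the end of Section \ref{sec.prel}, converts the radial form $r^{-2}u_t = (u^m)_{rr}$ of \eqref{eq1} with $N=1$ into \eqref{PMEconv} on $\real\times(0,\infty)$. Since $ds = dr/r$, the assumption \eqref{idminus} becomes exactly $w_0\in L^1(\real)$ with $\|w_0\|_{L^1(\real)} = M_0$, and the pointwise conditions $u_0\in L^\infty\cap C$, $u_0(0)=0$ guarantee that $w_0\in L^\infty(\real)$ with $w_0(s)\to 0$ as $s\to -\infty$. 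Moreover, the weighted norm $\|\cdot\|_{p,1}$ in the $x$-variable coincides, up to the symmetry factor $2^{1/p}$, with the ordinary $L^p(\real)$ norm in the variable $s$.

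With this reduction in hand, I would invoke the asymptotic simplification theorem for \eqref{PMEconv}. The self-similar rescaling $\bar w(y,t) := t^{1/m} w(t^{1/m}y,t)$ transforms \eqref{PMEconv} into an equation whose diffusion term is multiplied by $t^{-1/m}$ and therefore vanishes as $t\to\infty$, leaving the scalar conservation law whose unique entropy solution with Dirac initial datum $M_0\,\delta_0$ can be computed by the method of characteristics to be the rarefaction wave capped by a shock:
\[
\overline F(y) = \left[\frac{y}{m}\right]^{1/(m-1)} \quad {\rm for} \ 0<y<k, \qquad \overline F(y) = 0 \quad {\rm for} \ y\leq 0 \ {\rm or} \ y\geq k,
\]
where $k=k(M_0)$ is fixed by mass conservation $\int_0^k (y/m)^{1/(m-1)}\,dy = M_0$, yielding $k = m\bigl(M_0/(m-1)\bigr)^{(m-1)/m}$. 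The precise content of \cite{EVZ93} is that $\bar w(\cdot,t)\to \overline F$ as $t\to\infty$ in $L^p(\real)$ for every $1\leq p<\infty$ and in the sense of graphs. Unwinding the rescaling produces $w_*(s,t) = t^{-1/m}\overline F(s/t^{1/m})$; pulling $w_*$ back through $s = \log|x|$ yields exactly the profile $F$ of \eqref{profile21}, and the translation of norms converts the $L^p(\real)$-estimate into the weighted estimate \eqref{pconv}. Part (b) then follows by directly rewriting the convergence in the rescaled variables \eqref{ssvar}.

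The main difficulty will lie in cleanly translating the hypotheses of \cite{EVZ93} through the change of variable --- in particular, that $w_0$ decays fast enough at $-\infty$, which is precisely the integrability of $u_0(r)/r$ near $r=0$ encoded in \eqref{idminus} --- and in establishing the sense-of-graphs convergence near the shock, which is strictly stronger than $L^p$ convergence and typically requires Oleinik-type one-sided estimates on $(w^m)_s$ together with tight control of the free boundary of $w$ up to an error $o(t^{1/m})$. Beyond these points, the argument is essentially a direct application of the reduction followed by the asymptotic simplification of \cite{EVZ93}.
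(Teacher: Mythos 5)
Your reduction and the overall shape of the argument are exactly the paper's: the substitution $s=\log|x|$, $w(s,t)=u(x,t)$ turns \eqref{eq1} with $N=1$ into the porous medium equation with convection \eqref{PMEconv}; the hypothesis \eqref{idminus} becomes $w_0\in L^1(\real)$ with mass $M_0$; the limit profile is the source-type solution of \eqref{PMEconv} (equivalently, the rarefaction-plus-shock entropy solution of the limiting conservation law, with $k$ fixed by mass); and for even functions the weighted norm $\|\cdot\|_{p,1}$ is $2^{1/p}$ times the $L^p$ norm in $s$, so \eqref{pconv} is just the transformed $L^p$ statement. The one point you must repair is the external result you lean on: \cite{EVZ93} concerns $u_t=u_{xx}-(u^q)_x$ with \emph{linear} diffusion, so its theorems do not apply to \eqref{PMEconv}; the statement you actually need --- convergence with the rate $t^{(p-1)/mp}$ to the source-type profile for $w_t=(w^m)_{ss}-(w^q)_s$ in the case $q=m$ --- is Theorem 1.4 of Lauren\ced{c}ot--Simondon \cite{LS98} (the profile itself coming from \cite{LS97}), and this is precisely what the paper quotes for part (a). Your sketched fallback (rescale so the diffusion term carries a factor $t^{-1/m}$, identify the entropy solution with datum $M_0\delta_0$, then Oleinik-type one-sided estimates and interface control for the convergence near the shock) is the correct mechanism and is essentially how such results are proved, but as written it is a program rather than a proof; the paper short-circuits it by citing \cite{LS98} and then disposing of the convergence in the sense of graphs in part (b) by the argument of \cite[Proof of Theorem 1.4]{IS14}, with \cite{EVZ93} entering only as the source of the notion of graph convergence.
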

\noindent \textbf{Remarks. 1.} It is obvious that we cannot get
uniform convergence in \eqref{pconv} (by letting $p\to\infty$), as
the limit profile $F$ presents a \emph{peak-type discontinuity} and
it cannot thus be a uniform limit of continuous functions. The
convergence in the sense of graphs replaces the uniform convergence
here.

\noindent \textbf{2.} The results are quite similar to
\cite[Theorems 1.1 and 1.4]{IS14}. The main difference with respect
to there is that the direction of evolution of the profile is
\emph{forward}, as in dimension $N\geq3$ it evolves backward.

\noindent \textbf{3. Faster decay in the \emph{inner region}.} The
fact that the limit profile vanishes for $x\in[0,1]$ only means that
in that region, the solutions decay in time faster than the global
time scale $t^{-1/m}$. Indeed, we infer from the self-map
(inversion) for \eqref{eq1} between dimensions $N=3$ and $N=1$ (see
\cite[Section 2.1]{IRS}), and from the large time behavior in outer
regions for $N\geq3$ \cite[Theorem 1.1]{NR} and \cite[Theorem
1.11]{IS14}, that the large time behavior in \emph{inner regions}
near $x=0$ states
$$
\lim\limits_{t\to\infty}t^{1/(m-1)}\|u(\cdot,t)-B_{D}(\cdot,t)\|_{\infty}=0,
\qquad {\rm uniformly \ in \ sets} \ \{|x|\leq\delta t^{-1/(m-1)}\},
$$
for any $\delta>0$, where $B_D$ is an explicit profile,
$$
B_D(x,t)=t^{-1/(m-1)}\left[D+\frac{1}{m}\log\left(|x|t^{1/(m-1)}\right)\right]_{+}^{1/(m-1)},
$$
for some $D>0$ depending only on the initial condition $u_0$. This
shows in particular that the time decay rate in \emph{inner regions}
of the form above is faster, as expected ($t^{-1/(m-1)}$ instead of
the global $t^{-1/m}$), whence the uniform vanishing near $x=0$ of
the global profile $F$ in \eqref{profile21}.

\noindent \textbf{4. } The appearance of a \emph{peak-type profile}
is a very surprising and interesting result at first sight. It is
explained by the fact that Eq. \eqref{PMEconv} lies in a regime
where the convection term dominates on the large-time evolution,
producing limit profiles characteristic to conservation laws, not to
diffusion equations.

When $u_0(0)=K>0$, the asymptotic profiles are of a "special
self-similar" type. This comes in fact from a traveling wave profile
in transformed variables. More precisely,
\begin{theorem}\label{th.conv4}
Let $u$ be a solution to \eqref{eq1} with $m>1$, posed in dimension
$N=1$, with radially symmetric initial condition $u_0$ satisfying
\eqref{id} and moreover
\begin{equation}\label{idplus1}
u_0(0)=K>0, \qquad \lim\limits_{|x|\to\infty}u_0(x)=0, \qquad 0\leq
u_0(x)\leq K, \ {\rm for \ any} \ x\in\real,
\end{equation}
and
\begin{equation}\label{idplus}
u_0\in C^{0,\alpha}(\real), \ {\rm for \ some} \ \alpha\in(0,1].
\end{equation}
Then, there exists $x_0>0$ (depending only on $u_0$) such that
\begin{equation}\label{conv22}
\lim\limits_{t\to\infty}\|u(\cdot,t)-U_{x_0}(\cdot,t)\|_{\infty}=0,
\end{equation}
where
\begin{equation}\label{profile22}
U_{x_0}(x,t)=\left[K^{m-1}-\left(\frac{|x|}{x_0}e^{-ct}\right)^{(m-1)/m}\right]_{+}^{1/(m-1)},
\qquad c=K^{m-1}.
\end{equation}
\end{theorem}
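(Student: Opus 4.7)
\noindent\textbf{Plan for Theorem \ref{th.conv4}.} The plan is to reduce \eqref{eq1} in dimension $N=1$ to the convective porous medium equation \eqref{PMEconv}, as announced in the excerpt, and then identify the profile $U_{x_0}$ as a traveling wave for \eqref{PMEconv}. Working on $x>0$ by radial symmetry, I introduce $s=\log x$ and $w(s,t)=u(e^s,t)$; a direct calculation based on $\partial_x=e^{-s}\partial_s$ gives $(u^m)_{xx}=e^{-2s}\bigl((w^m)_{ss}-(w^m)_s\bigr)$, so $w$ solves $w_t=(w^m)_{ss}-(w^m)_s$. In the moving variable $\xi=s-ct$, the candidate $U_{x_0}$ reads exactly as
\[
\Phi_{x_0}(\xi)=\bigl[K^{m-1}-x_0^{-(m-1)/m}e^{(m-1)\xi/m}\bigr]_+^{1/(m-1)},
\]
which I would confirm is a traveling wave solution of \eqref{PMEconv} by substitution: one integration of $-c\Phi'=(\Phi^m)''-(\Phi^m)'$ using $\Phi(-\infty)=K$ and $(\Phi^m)'(\pm\infty)=0$ pins down $c=K^{m-1}$ from the matching at $+\infty$, after which the resulting autonomous ODE in $\psi=\Phi^{m-1}$ is linear and integrates to the displayed formula, leaving a one-parameter family indexed by the shift $x_0>0$.

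Next I would bracket $w$ between two shifted traveling waves. Since $0\le u_0\le K$, $u_0(0)=K$, $u_0\to 0$ at infinity, and $\{\Phi_{x_0}\}_{x_0>0}$ is a monotonically ordered family ranging from $K$ at $-\infty$ to $0$ at $+\infty$, one can find $0<x_-<x_+<\infty$ with $\Phi_{x_-}(s)\le w_0(s)\le \Phi_{x_+}(s)$ for every $s$. The H\"older hypothesis \eqref{idplus} enters precisely in building the lower bracket near $s=-\infty$ (equivalently near $x=0$): by choosing $x_-$ small enough, one matches the polynomial approach of $\Phi_{x_-}$ to $K$ (which in the $x$-variable is of order $x^{(m-1)/m}$) with the one permitted by $u_0\in C^{0,\alpha}$. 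The comparison principle for \eqref{PMEconv} recalled in Section \ref{sec.prel} then propagates the sandwich to all positive times, yielding $\Phi_{x_-}(s-ct)\le w(s,t)\le\Phi_{x_+}(s-ct)$ for $t>0$ and so convergence of $w$ to the traveling-wave family up to shifts.

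The final step, and what I expect to be the \emph{main obstacle}, is to identify a single shift $x_*$ as the limit. I would proceed via an $\omega$-limit analysis in the moving frame: set $v_n(s,t):=w(s+c(t+n),t+n)$; the sandwich bounds together with standard interior regularity for the convective porous medium equation make $\{v_n\}$ precompact in $C_{\mathrm{loc}}(\real\times\real)$, and every cluster point $v_\infty$ solves \eqref{PMEconv} on $\real^2$, is squeezed between $\Phi_{x_-}$ and $\Phi_{x_+}$, and must therefore, by the ODE classification of step one, coincide with some $\Phi_{x_*}$. To ensure $x_*$ does not depend on the subsequence I would invoke the $L^1$-contraction for \eqref{PMEconv}: the difference of two solutions sharing the same limits at $\pm\infty$ lies in $L^1(\real)$ and its $L^1$-norm is nonincreasing in time (Kruzhkov's doubling-of-variables argument, or multiplication by $\mathrm{sgn}(w-\Phi_{x_0})$ followed by integration, the boundary terms at $\pm\infty$ vanishing because both profiles share endpoint values $K$ and $0$). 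Applying this to $w(\cdot,t)$ and $\Phi_{x_0}(\cdot-ct)$ and combining with the $C_{\mathrm{loc}}$ convergence and the strict monotonicity of the family $\{\Phi_{x_0}\}$ in $x_0$ selects a unique $x_*=x_0$ depending only on $u_0$. Reverting $s=\log|x|$ then yields \eqref{conv22} with the explicit profile \eqref{profile22}; the delicate technical points will be the justification of the $L^1$-contraction at the level of weak/viscosity solutions handled in Section \ref{sec.prel} and the compatibility of the free-boundary behavior of $w$ with that of the $\Phi_{x_0}(\cdot-ct)$'s, for which the H\"older assumption is again useful.
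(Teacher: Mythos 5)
Your opening moves coincide with the paper's (the change of variables $s=\log|x|$ leading to \eqref{PMEconv}, and the identification of the traveling--wave family \eqref{TWs}), but the core of your argument --- the pointwise sandwich $\Phi_{x_-}(s)\le w_0(s)\le\Phi_{x_+}(s)$ by two translates of the \emph{exact-speed} wave --- generally fails under the stated hypotheses. On the right, every wave of speed $c=K^{m-1}$ vanishes beyond a finite front, while $u_0$ is only assumed to tend to $0$ at infinity; if $u_0>0$ everywhere (no compact support is assumed), no translate $\Phi_{x_+}$ dominates $w_0$ at $t=0$. On the left, $K-\Phi_{x_-}(\xi)\sim C\,e^{(m-1)\xi/m}$ as $\xi\to-\infty$, i.e.\ $\sim C|x|^{(m-1)/m}$ near the origin, whereas \eqref{idplus} only gives $K-u_0(x)\le H|x|^{\alpha}$ with an arbitrary $\alpha\in(0,1]$; for $\alpha<(m-1)/m$ there are admissible data (e.g.\ $u_0=K-H|x|^{\alpha}$ near $0$) that lie below \emph{every} translate $\Phi_{x_-}$ near $x=0$, so no lower bracket with speed $c$ exists, contrary to your claim that choosing $x_-$ small enough suffices. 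Replacing the brackets by waves of speeds $(K\mp\varepsilon)^{m-1}$ (this is exactly how the paper proves \eqref{interm20}) restores the ordering at $t=0$, but those brackets drift linearly away in the frame moving with speed $c$, so they neither confine the solution to a compact transition zone nor provide the far-field control you need to upgrade locally uniform convergence in the moving frame to the global statement \eqref{conv22}. A further gap: in the identification step you assert that two solutions sharing the limits $K$ and $0$ at $\mp\infty$ automatically have integrable difference; this is false (the difference may decay like $1/|s|$), so the $L^1$-contraction argument needs integrability of $w_0-\Phi_{x_0}(\cdot,0)$ as an input --- the same integrability that underlies the mass condition \eqref{interm17} in the paper.

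For comparison, the paper does not reprove traveling-wave stability at all: it invokes the $L^1$-stability theorems of Osher--Ralston and van Duijn--de Graaf to get $\|w(\cdot,t)-W_{s_0}(\cdot,t)\|_1\to0$ with $s_0$ fixed by \eqref{interm17}, proves that $u(0,t)=K$ for all $t$ via comparison with slower waves (\eqref{interm20}), and then uses the H\"older hypothesis \eqref{idplus} for a purpose entirely different from yours: through Lemma \ref{lem.Holder} it yields a time-uniform H\"older bound on $u(\cdot,t)$, and the interpolation Lemma \ref{lem.passing} converts the $L^1$ convergence into the uniform convergence \eqref{conv22}. Your $\omega$-limit/$L^1$-contraction scheme is in essence a re-derivation of the Osher--Ralston result, which would be legitimate, but as written the bracketing and the integrability step are precisely where it breaks, and without a substitute for the paper's uniform H\"older-plus-interpolation mechanism you do not recover the $L^\infty$ conclusion.
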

\noindent The dependence of $x_0$ on the initial condition $u_0$ is
given via an integral condition in transformed variables, see
\eqref{interm17} in Subsection \ref{subsec.N1K}.

\noindent \textbf{Remarks. 1.} Profiles \eqref{profile22} were first
obtained in \cite[Section 7]{GKKV}. The form of them is not standard
self-similar.

\noindent \textbf{2. } In fact, we have a family of profiles
$U_{x_0}$, for any $x_0\in\real$, and one can readily notice that
they are not equivalent, that is,
$$
\|U_{x_0}(\cdot,t)-U_{y_0}(\cdot,t)\|\not\to 0, \qquad {\rm as} \
t\to\infty,
$$
for $x_0\neq y_0$. Thus, it is a relevant fact that we can identify
the exact translation parameter $x_0$, only in terms of the initial
condition $u_0$.

\noindent \textbf{3. Result for non-radial solutions.} In dimension
$N=1$, we are able to extend the large time behavior to solutions
with general (not radially symmetric) data $u_0$. We postpone this
fact to the final section, see \eqref{conv.nonradial} for a result.

\medskip

\noindent \textbf{Organization of the paper.} We begin with a rather
standard section which gathers well-posedness and some preliminary
facts about the solutions to \eqref{eq1}, that are later used in the
proofs. We devote further Sections \ref{sec.N2} and \ref{sec.N1} to
the proofs of the main results Theorems \ref{th.conv1} and
\ref{th.conv2} (concerning dimension $N=2$), respectively Theorems
\ref{th.conv3} and \ref{th.conv4} (concerning dimension $N=1$). Both
sections are divided into symmetrical subsections, according to
whether $u_0(0)=0$ or $u_0(0)>0$. In general, the proofs for the
case $u_0(0)=0$ are quite short, while the cases when $u_0(0)>0$ are
the most interesting. We end the present work by Section
\ref{sec.final} of extensions and comments, in which, noticeably, we
complete the panorama of the large time behavior for data $u_0$
which are \emph{not radially symmetric in dimension $N=1$} and with
the (easy) linear case $m=1$.

\section{Preliminaries: well-posedness and change of
variable}\label{sec.prel}

The existence and uniqueness of solutions to \eqref{eq1} posed in
radially symmetric variables is granted by the results in
\cite[Section 5]{KT04}. We sketch them here for the sake of
completeness. Let us consider the more general Cauchy problem (with
radially symmetric general density function $\varrho$)
\eqref{eq2}-\eqref{id} with $u_0$ radially symmetric and $m\geq1$.
We state the following
\begin{definition}\label{def.sol}
By a radial solution to problem \eqref{eq2}-\eqref{id} in
$\real^N\times(0,\infty)$, we understand any nonnegative, radially
symmetric function $u$, continuous in $\real^N\times(0,\infty)$ such
that, for any rectangle $(r_1,r_2)\times(0,T)$ with $0<r_1<r_2$,
$T\in(0,\infty)$ we have
\begin{equation}\label{sol}
\begin{split}
\int_0^T\int_{r_1}^{r_2}&\left[\varrho
u\varphi_t+\left(\varphi_{rr}+\frac{N-1}{r}\varphi_r\right)u^m\right]r^{N-1}\,dr\,dt\\
&=\int_{r_1}^{r_2}\varrho\left[u(r,T)\varphi(r,T)-u_0(r)\varphi(r,0)\right]r^{N-1}\,dr\\
&+\int_0^T\left[r_2^{N-1}u^m(r_2,t)\varphi_r(r_2,t)-r_1^{N-1}u^m(r_1,t)\varphi_r(r_1,t)\right]\,dt,
\end{split}
\end{equation}
for any test function $\varphi\in C^{2,1}((r_1,r_2)\times(0,T))$,
$\varphi\geq0$, such that $\varphi(r_1,t)=\varphi(r_2,t)=0$ for any
$t\in[0,T)$.
\end{definition}
Notice that Definition \ref{def.sol} extends in fact the standard
notion of \emph{very weak solution} to a parabolic PDE (when all
derivatives are translated to the test function), employed for
example in \cite[Chapter 6.2]{VazquezPME}. In this general
framework, the well-posedness result is given by \cite[Theorem 5.2,
part (i)]{KT04}, which states the following:
\begin{proposition}\label{prop.wp}
Let $\varrho(r)\sim r^{-\gamma}$ both as $r\to0$ and as
$r\to\infty$. Then, in the previous conditions and notation, the
Cauchy problem \eqref{eq2}-\eqref{id} is well-posed in
$L^{\infty}([0,T];\real^N)\cap C({\real^N\times(0,\infty)})$ for any
$T<\infty$, if and only if either $N=2$ or $\gamma=2$. Letting in
particular $\varrho(r)=r^{-2}$, it follows that the Cauchy problem
\eqref{eq1}-\eqref{id} is well-posed.
\end{proposition}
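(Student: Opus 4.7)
The plan is to reduce the well-posedness question, via the change of radial variable $s=\log r$, $v(s,t)=u(r,t)$, to a well-understood one-dimensional evolution equation, and to construct the solution of \eqref{eq2}--\eqref{id} by approximating the singular density by smooth, bounded ones. Throughout I focus on the model case $\varrho(r)=r^{-\gamma}$; the general case $\varrho\sim r^{-\gamma}$ follows by a standard perturbation/comparison argument since the two densities are uniformly comparable on every compact annulus.

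For existence, I would approximate $\varrho$ by a family $\varrho_\varepsilon$ of smooth, strictly positive, bounded densities that coincide with $\varrho$ on each fixed annulus $\{r_1\le|x|\le r_2\}$. Classical porous medium theory for \eqref{eq2} with bounded smooth density produces unique continuous weak solutions $u_\varepsilon$, bounded by $\|u_0\|_\infty$ via the comparison principle. Interior H\"older estimates of Aronson--Caffarelli type, uniform in $\varepsilon$ on each such annulus, together with B\'enilan--Crandall time-monotonicity $u_t\ge -u/((m-1)t)$, yield local compactness; an Arzel\`a--Ascoli argument then produces a limit $u$ that satisfies the weak formulation \eqref{sol} on every rectangle $(r_1,r_2)\times(0,T)$, since $\varrho$ is bounded there away from $0$ and $\infty$.

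For uniqueness, the key point is that in both admissible regimes the radial equation becomes a standard $1$D parabolic PDE under $s=\log r$. A short calculation gives $u_{rr}=e^{-2s}(v_{ss}-v_s)$ and $u_r=e^{-s}v_s$, so the radial form of \eqref{eq2} transforms into
\begin{equation*}
  \partial_t v \;=\; e^{(\gamma-2)s}\bigl[(v^m)_{ss}+(N-2)\,(v^m)_s\bigr], \qquad (s,t)\in\real\times(0,\infty).
\end{equation*}
When $\gamma=2$ this reduces to the $1$D porous medium equation with linear convection $\partial_t v=(v^m)_{ss}+(N-2)(v^m)_s$; when $N=2$ it becomes a $1$D PME with smooth, strictly positive diffusion coefficient $\partial_t v=e^{(\gamma-2)s}(v^m)_{ss}$. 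In either case the classical $L^1$-contraction and comparison theory yields uniqueness of bounded continuous weak solutions on $\real$ with prescribed initial data, and since $s=\log r$ is a bijection of $(0,\infty)$ onto $\real$ that transports the weak formulation \eqref{sol}, uniqueness for \eqref{eq2}--\eqref{id} follows.

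The main obstacle is the sharpness of the ``if and only if'': when $N\ge 3$ and $\gamma\ne 2$, the transformed equation carries \emph{both} a degenerate space-dependent diffusivity $e^{(\gamma-2)s}$ and a genuine convective term $(N-2)(v^m)_s$, so the endpoint $s=-\infty$ (i.e.\ the origin $r=0$) becomes an effective boundary through which mass may leak in several non-equivalent ways. To establish necessity one must exhibit explicit non-uniqueness---for instance, two self-similar or separate-variables solutions with the same zero initial data but distinct fluxes at the origin, in the spirit of the explicit solutions of \cite{GKKV, IRS}---and it is this verification that constitutes the technical core of \cite{KT04}.
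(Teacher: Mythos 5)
You should first be aware that the paper itself offers no proof of this proposition: it is stated as a quotation of \cite[Theorem 5.2, part (i)]{KT04}, so the paper's ``proof'' is a citation. Measured as an actual proof, your proposal has two genuine gaps. The first is that the ``only if'' half is absent: you describe what would have to be done (exhibit non-uniqueness when $N\geq3$ and $\gamma\neq2$) and explicitly defer it to \cite{KT04}. Since the statement is an equivalence, this is half of the result, not a removable technicality.

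The second gap is in the ``if'' half. Your change of variable and the resulting equation $\partial_t v=e^{(\gamma-2)s}\bigl[(v^m)_{ss}+(N-2)(v^m)_s\bigr]$ are correct, but for $N=2$, $\gamma\neq2$ the conclusion that ``classical $L^1$-contraction and comparison theory yields uniqueness of bounded continuous weak solutions'' does not follow. Written in density form the equation is $e^{(2-\gamma)s}\partial_t v=(v^m)_{ss}$ on $\real$: a one-dimensional inhomogeneous porous medium equation whose density is exponentially large at one end of the line and exponentially small at the other, i.e.\ a problem of exactly the same singular type as the one you started from. The natural contraction is in the weighted space $L^1(e^{(2-\gamma)s}\,ds)$, and bounded solutions are not integrable against this weight at the heavy end, so the classical contraction argument does not directly give uniqueness in the class $L^\infty\cap C$; the delicate point is precisely whether the light end (where the density decays exponentially) acts as a boundary through which bounded solutions can gain or lose mass, and it is this dichotomy that separates $N=2$ and $\gamma=2$ from the remaining cases in \cite{KT04}. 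A genuine proof needs barrier/cutoff arguments at $s=\pm\infty$ (or the sub/supersolution machinery of Kersner--Tesei), not an appeal to standard theory; as written, the $N=2$ case assumes what is to be proved. The existence-by-approximation sketch is plausible, though attainment of the initial datum near $r=0$, where $\varrho$ is strongly singular, also requires more than interior estimates on annuli.
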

The following standard result is useful in the sequel.
\begin{lemma}\label{lem.prel}
Let $u$ be a solution to Eq. \eqref{eq1}, with initial condition
$u_0$ as in \eqref{id} and moreover \emph{radially symmetric and
radially non-increasing}. Then, the solution $u(x,t)$ is also
radially symmetric and radially non-increasing at any time $t>0$
\end{lemma}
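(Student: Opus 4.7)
The radial symmetry of $u(\cdot,t)$ is immediate from the uniqueness of radial solutions given by Proposition \ref{prop.wp}, so the real content of the lemma is the preservation of the monotonicity in $r$. My plan is to reduce the problem to a \emph{translation-invariant} one-dimensional equation by means of the logarithmic substitution introduced at the end of this section: set $s=\log r$ and $w(s,t):=u(e^s,t)$ for $s\in\real$. A direct computation transforms the radial form of \eqref{eq1},
\begin{equation*}
r^{-2}u_t=(u^m)_{rr}+\frac{N-1}{r}(u^m)_r,
\end{equation*}
into
\begin{equation*}
w_t=(w^m)_{ss}+(N-2)(w^m)_s,\qquad (s,t)\in\real\times(0,\infty),
\end{equation*}
which for $N=2$ is the standard one-dimensional PME \eqref{PME} and for $N=1$ is the convective PME \eqref{PMEconv}. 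Both equations are translation invariant in $s$ and admit the standard $L^{\infty}$ comparison principle for bounded nonnegative solutions.

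Once this reduction is in place, the monotonicity follows by a classical sliding argument. For any $h>0$, the function $W(s,t):=w(s+h,t)$ solves the transformed equation with initial data $W_0(s)=u_0(e^{s+h})\leq u_0(e^s)=w_0(s)$, the inequality being a consequence of the radial non-increasing hypothesis on $u_0$ together with the monotonicity of $s\mapsto e^s$. Comparison then yields $W(s,t)\leq w(s,t)$, that is, $w(s+h,t)\leq w(s,t)$ for all $s\in\real$ and $t>0$, and since $h>0$ is arbitrary, $w(\cdot,t)$ is non-increasing on $\real$. Undoing the substitution, $u(r,t)=w(\log r,t)$ is non-increasing in $r\in(0,\infty)$, and continuity extends the monotonicity across $r=0$.

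The main obstacle is ensuring that the substitution sends the very weak solution $u$ of \eqref{eq1} (in the sense of Definition \ref{def.sol}) to a solution $w$ of the transformed equation to which the standard 1D (possibly convective) PME comparison principle applies. This amounts to a routine change-of-variables computation in the weak formulation \eqref{sol} restricted to radial test functions: test functions $\varphi(r,t)$ on a radial annulus $(r_1,r_2)\times(0,T)$ become test functions $\tilde\varphi(s,t)=\varphi(e^s,t)$ on $(\log r_1,\log r_2)\times(0,T)$, and the weights $r^{N-1}$ and $r^{-2}$ combine with the Jacobian $\mathrm{d}r=e^s\mathrm{d}s$ so as to reproduce exactly the very weak formulation of the transformed equation. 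Once this correspondence is checked, the remainder of the argument is a one-line application of comparison and translation invariance, with no need to differentiate $u$ or to regularize the degeneracy at $u=0$.
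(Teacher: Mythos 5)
Your argument is correct, but it follows a genuinely different route from the paper. The paper proves radial monotonicity by (formally) differentiating the radial equation: setting $v=u^m$ and $z=\partial_r v$, it derives a parabolic equation satisfied by $z$, compares $z$ with the trivial solution $z\equiv 0$ (using $z(r,0)\leq 0$ and $z(0,t)=0$), and then invokes a standard approximation procedure, as in \cite[Chapter 9.3]{VazquezPME}, to make the formal computation rigorous. You instead exploit the logarithmic substitution $s=\log r$, which turns the radial equation into the translation-invariant one-dimensional equation $w_t=(w^m)_{ss}+(N-2)(w^m)_s$ (the PME for $N=2$, the convective PME for $N=1$), and run a sliding argument: comparing $w(\cdot+h,t)$ with $w(\cdot,t)$ for each $h>0$ gives monotonicity in $s$, hence in $r$, with continuity handling $r=0$. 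Your approach buys something real: it never differentiates the solution nor regularizes the degeneracy at $u=0$, so no approximation step is needed; its only ingredients are translation invariance and the comparison principle for bounded solutions of the transformed equations, both standard (and the identification of radial very weak solutions of \eqref{eq1} with solutions of the transformed Cauchy problem is exactly the correspondence the paper itself uses throughout, so relying on it is legitimate, though you rightly flag that the change of variables in the very weak formulation \eqref{sol} should be checked once). The paper's derivative-based argument, by contrast, does not depend on the specific logarithmic structure and would adapt to more general densities, but at the price of being only formal until the approximation machinery is invoked. Your treatment of radial symmetry (rotation invariance plus uniqueness) coincides with the paper's.
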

\begin{proof}
The radial symmetry follows from the invariance of Eq. \eqref{eq1}
to rotations. For the radial monotonicity, we use a standard
argument with a twist, that we only give at formal level. We write
Eq. \eqref{eq1} in radial variables as
$$
\partial_tu=r^2\partial_{rr}(u^m)+(N-1)r\partial_{r}(u^m),
$$
and, by letting $v(r,t)=u^m(r,t)$, we obtain
\begin{equation}\label{interm7}
\partial_tv=m\left[r^2v^{(m-1)/m}\partial_{rr}v+(N-1)rv^{(m-1)/m}\partial_rv\right],
\end{equation}
with the property that $\partial_rv(r,0)\leq0$, for any $r\geq0$. We
now consider $z:=\partial_rv$, differentiate Eq. \eqref{interm7}
with respect to $r$ and, after straightforward calculations (that we
leave to the reader), we get the equation satisfied by $z$:
\begin{equation}\label{interm8}
\begin{split}
\partial_tz&-mr^2v^{(m-1)/m}\partial_{rr}z-\left[2mrv^{(m-1)/m}+m(N-1)rv^{(m-1)/m}-(m-1)r^2v^{-1/m}z\right]z_r\\
&-\left[(N-1)mv^{(m-1)/m}z+(m-1)(N-1)rv^{-1/m}z^2\right]=0.
\end{split}
\end{equation}
From the assumption of monotonicity, we know that
$z(r,0)=\partial_rv(r,0)\leq0$, for any $r\geq0$. Since $z\equiv0$
is a solution to \eqref{interm8} and $z(0,t)=\partial_r(u^m)(0,t)=0$
for any $t>0$, by comparison we obtain that
$z(r,t)=\partial_r(u^m)(r,t)\leq0$, for any
$(r,t)\in[0,\infty)\times(0,\infty)$, which implies that $u^m(r,t)$,
hence also $u(r,t)$, is radially non-increasing for any $t>0$. All
the above is justified at formal level. A rigorous proof follows by
standard approximation following the method in \cite[Chapter
9.3]{VazquezPME}.
\end{proof}
In order to be able to pass from estimates in $L^1$ to estimates in
$L^{\infty}$ for the solutions in dimension $N=1$, we need some
further regularity of the solutions. This is insured by the
following
\begin{lemma}\label{lem.Holder}
Let $u$ be a non-negative, radially symmetric solution to
\eqref{eq1} in dimension $N=1$, such that its initial condition
satisfies $u_0\in C^{0,\alpha}(\real)$, for some $\alpha\in(0,1)$,
and moreover $u(0,t)=K>0$ for any $t\geq0$. Then $u(\cdot,t)\in
C^{0,\alpha}(\real)$ for any $t>0$, and the Holder constant is
uniformly bounded (that is, independent of time variable).
\end{lemma}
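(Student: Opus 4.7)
My plan is to combine a stationary barrier argument near the origin with classical interior regularity for the transformed equation, and to patch the two estimates via a scale-based dichotomy. A preliminary observation is that, in the setting in which this lemma is invoked (namely the proof of Theorem~\ref{th.conv4}), $u_0\le K$, so the maximum principle gives $0\le u\le K$ throughout $\real\times(0,\infty)$; moreover, via the change of variables $s=\log|x|$, $w(s,t)=u(e^s,t)$ indicated at the end of Section~\ref{sec.prel}, the equation \eqref{eq1} restricted to the half-line $x>0$ (which, by radial symmetry, determines $u$ everywhere) becomes the porous medium equation with convection \eqref{PMEconv}, a PDE which is translation invariant in $s$ and for which $w(s,t)\to K$ as $s\to-\infty$.

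The crucial first step is the uniform-in-$t$ estimate $|u(x,t)-K|\le C|x|^{\alpha}$ for $|x|$ small, with $C$ depending only on $[u_0]_{C^{0,\alpha}}$ and $K$. I would obtain this by taking as subsolution the stationary profile
$$
\Psi(s) := \bigl[K - C_0\, e^{\alpha s}\bigr]_{+}, \qquad C_0 = [u_0]_{C^{0,\alpha}},
$$
and verifying by a direct calculation that, where $\Psi>0$,
$$
(\Psi^m)_{ss} - (\Psi^m)_s \;=\; m\alpha C_0(1-\alpha)\, e^{\alpha s}\,\Psi^{m-1} + m(m-1)\alpha^2 C_0^{2}\, e^{2\alpha s}\,\Psi^{m-2} \;\ge\; 0
$$
for every $\alpha\in(0,1]$, so that $\Psi$ is a stationary weak subsolution of \eqref{PMEconv}. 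The Hölder continuity of $u_0$ at the origin gives $w(s,0)=u_0(e^s)\ge K-C_0\, e^{\alpha s}\ge\Psi(s)$, and the comparison principle for \eqref{PMEconv} then yields $w(s,t)\ge\Psi(s)$ for every $t\ge 0$; combined with $w\le K$, this delivers the desired estimate.

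For the regularity of $u(\cdot,t)$ away from the origin, I would appeal to the classical interior Hölder regularity theory for the 1D porous medium equation with convection (cf., e.g., \cite[Chapter~7]{VazquezPME}), which provides uniform-in-$t$ Hölder estimates for $w$ in $s$ on bounded intervals with constants depending only on $\|w\|_\infty=K$; furthermore, in the region $\{w\ge K/2\}$, which by the first step contains a half-line $\{s\le s_*\}$ with $s_*$ determined by $u_0$ and $K$, the equation is uniformly parabolic and $w$ is smooth in $s$, with bounds uniform in $t$. To patch the two estimates and conclude uniform Hölder-$\alpha$ regularity in $x$ on $\real$, I would use a dichotomy on two points $0<x_1<x_2$: if $h:=x_2-x_1\ge x_2/2$, the origin estimate alone suffices; if $h<x_2/2$, both points lie in $[x_2/2,x_2]$ and one must convert the interior estimate from $s$ back to $x$. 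The main obstacle is precisely this last step: the bare Lipschitz bound $|w_s|\le L$ translates to $|u_x|\le L/|x|$, which at scales comparable to $|x|$ near the origin is too weak to yield Hölder-$\alpha$ in $x$. I would overcome this by a refined gradient estimate $|w_s(s,t)|\le C\, e^{\alpha s}$ (equivalently $|u_x(x,t)|\le C|x|^{\alpha-1}$), obtained either via a Bernstein-type maximum principle for $w_s$ in the linearized regime $w\approx K$, or by comparison with a one-parameter family of horizontally translated copies of the barrier $\Psi$.
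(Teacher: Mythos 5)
Your core step is the paper's own argument in a different guise: the paper works directly in the $x$ variable on the half-line $(0,\infty)$ and sandwiches $u$ between the stationary barriers $w_-(x)=[K-Hx^{\alpha}]_+$ and $w_+(x)=K+Hx^{\alpha}$, and your $\Psi(s)=[K-C_0e^{\alpha s}]_+$ is exactly $w_-$ rewritten in the variable $s=\log x$, with the same subsolution computation. There are two genuine deviations. First, you have no supersolution: your upper bound is imported from the hypothesis $0\le u_0\le K$ of Theorem \ref{th.conv4}, which is \emph{not} among the hypotheses of the lemma, so as written you prove only a special case (enough for the application, but weaker than the statement). The paper closes this within the lemma's own hypotheses by checking that $K+Hx^{\alpha}$ is a supersolution once $H$ is chosen large enough that $(m-1)\alpha-H(1-\alpha)\le 0$; this is also where the restriction $\alpha<1$ does real work (Lipschitz data is then handled by Corollary \ref{cor.Lip}). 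Second, in the paper the hypothesis $u(0,t)=K$ is precisely what licenses the comparison: the barriers are compared with $u$ on $(0,\infty)\times(0,\infty)$, and $u(0,t)=K$ supplies the ordering on the lateral boundary $\{x=0\}$. In your whole-line formulation that hypothesis does essentially no work; instead you are tacitly relying on a comparison principle for bounded solutions of the Cauchy problem for \eqref{PMEconv} on all of $\real$ together with the identification of the transform of $u$ as that Cauchy solution. This is consistent with how the paper uses the transformation elsewhere, but it should be stated, since the notion of solution for \eqref{eq1} (Definition \ref{def.sol}) only tests away from the origin.

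Concerning the region away from the origin and the patching, you are more demanding than the paper: the paper declares the estimate ``obvious at the regular points of $u$'' and only proves rigorously the bound $|u(x,t)-K|\le H|x|^{\alpha}$, i.e.\ H\"older continuity relative to the origin. Your dichotomy, and the observation that a bare Lipschitz bound in $s$ translates to $|u_x|\le L/|x|$ and is too weak at scales small compared with $|x|$, is a fair diagnosis; but the refined bound $|w_s|\le Ce^{\alpha s}$ that you invoke to repair it is left unproven, and for small times with merely H\"older data a Bernstein-type bound of this kind is not immediate (for $t\ge\tau>0$ it does follow from interior parabolic estimates applied to $w-K$ in the uniformly parabolic region $\{w\ge K/2\}$, using the barrier bound $|w-K|\le Ce^{\alpha s}$). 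So, measured against the paper's own proof, your proposal covers the same essential ground (the stationary H\"older barrier at the origin) modulo the missing supersolution, and the unfinished patching step is a gap only in the same sense in which the paper itself leaves that step to the reader.
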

The unusual condition $u(0,t)=K$ for any $t>0$ is specific to Eq.
\eqref{eq1}, as shown in Subsections \ref{subsec.N2K} and
\ref{subsec.N1K} (see also \cite[Theorem 1.6]{IS14} for dimension
$N\geq3$).
\begin{proof}
The proof relies on the comparison principle. As the result is
obvious at the regular points of $u$, we will only prove it around
$x=0$. From hypothesis, we know that there exists $H>0$ such that
$$
|u_0(x)-K|\leq H|x|^{\alpha}, \qquad {\rm for \ any} \
x\in[0,\infty),
$$
or equivalently
$$
w_{-}(x):=\left[K-Hx^{\alpha}\right]_{+}\leq u_0(x)\leq
w_{+}(x):=K+Hx^{\alpha}, \qquad {\rm for \ any} \ x\in[0,\infty),
$$
the radial symmetry insuring that it is enough to work on the right
half-line. We next show that $w_{-}$ and $w_{+}$ are respectively a
sub- and a supersolution to \eqref{eq1} in
$(0,\infty)\times(0,\infty)$. Indeed, defining the operator
$$
\mathcal{L}u(x,t):=|x|^{-2}\partial_tu(x,t)-\Delta u^m(x,t),
$$
we have for $x>0$
\begin{equation*}
\mathcal{L}w_{-}(x,t)=-m(m-1)\left[K-Hx^{\alpha}\right]_{+}^{m-2}\alpha^2x^{2\alpha-2}+m\left[K-Hx^{\alpha}\right]_{+}^{m-1}\alpha(\alpha-1)x^{\alpha-2}\leq0,
\end{equation*}
which shows that $w_{-}$ is a subsolution. On the other hand,
\begin{equation*}
\begin{split}
\mathcal{L}w_{+}(x,t)&=-m(m-1)\left[K+Hx^{\alpha}\right]^{m-2}\alpha^2x^{2\alpha-2}\\
&-m\left[K+Hx^{\alpha}\right]^{m-1}\alpha(\alpha-1)x^{\alpha-2}\\
&=-m\left[K+Hx^{\alpha}\right]^{m-2}\alpha
x^{\alpha-2}\left[r^{\alpha}((m-1)\alpha-H(1-\alpha))-K(1-\alpha)\right].
\end{split}
\end{equation*}
Since $\alpha<1$, one can choose $H_0>0$ sufficiently large such
that
$$
\alpha\leq\frac{H}{m-1+H}, \qquad {\rm for \ any } \ H\geq H_0,
$$
or equivalently, $(m-1)\alpha-H(1-\alpha)\leq0$. As $K(1-\alpha)>0$,
it readily follows that $\mathcal{L}w_{+}\geq0$. Since $w_{-}$ and
$w_{+}$ are stationary, and the comparison on the lateral boundary
$\{0\}\times(0,\infty)$ is insured by the fact that $u(0,t)=K$ for
any $t>0$, the comparison principle entails
$$
w_{-}(x)\leq u(x,t)\leq w_{+}(x), \qquad {\rm for \ any} \
(x,t)\in(0,\infty)\times(0,\infty),
$$
whence $u(\cdot,t)\in C^{0,\alpha}(\real)$ for any $t>0$. Moreover,
the Holder constant is uniformly bounded by the sufficiently large
$H_0$ chosen for $w_{+}$ to be a supersolution, and this constant
does not depend on time.
\end{proof}
\noindent \textbf{Remark.} The proof above does not directly allow
for $\alpha=1$, that is, Lipschitz data. But if $u_0$ is Lipschitz
and uniformly bounded, it is also Holder continuous for any
$\alpha<1$, since if $x$, $y\in\real$ such that $|x-y|\geq1$, then
$$
|u_0(x)-u_0(y)|\leq2\|u_0\|_{\infty}\leq2\|u_0\|_{\infty}|x-y|^{\alpha},
$$
for any $\alpha<1$, and if $|x-y|<1$, then
$$
|u_0(x)-u_0(y)|\leq L|x-y|\leq L|x-y|^{\alpha},
$$
for any $\alpha<1$. Thus, we get the following
\begin{corollary}\label{cor.Lip}
If, in the previous notation and hypothesis, $u_0$ is a Lipschitz
function, then $u(\cdot,t)\in C^{0,\alpha}(\real)$ for any
$\alpha\in(0,1)$ and $t>0$.
\end{corollary}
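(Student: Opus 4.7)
The plan is to observe that this is a direct consequence of Lemma \ref{lem.Holder} combined with the remark immediately preceding the corollary statement. First I would note that Lemma \ref{lem.Holder} is proved only for exponents $\alpha \in (0,1)$ strictly less than one, since its proof hinges on the inequality $(m-1)\alpha - H(1-\alpha) \leq 0$, which would fail for $\alpha = 1$ (the factor $1-\alpha$ would vanish and the argument choosing $H$ large would break down). So I cannot apply the lemma with $\alpha = 1$ even if $u_0$ is Lipschitz.

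However, I would argue that one does not need $\alpha = 1$: the conclusion sought is only $C^{0,\alpha}$-regularity for arbitrary $\alpha \in (0,1)$, and for this the Lipschitz assumption is already stronger than what the lemma requires. Concretely, I would fix $\alpha \in (0,1)$ and verify that $u_0 \in C^{0,\alpha}(\real)$, using the two-case argument displayed in the preceding remark: for $|x-y| \geq 1$, boundedness gives $|u_0(x)-u_0(y)| \leq 2\|u_0\|_\infty |x-y|^\alpha$, while for $|x-y| < 1$, the Lipschitz bound gives $|u_0(x)-u_0(y)| \leq L|x-y| \leq L|x-y|^\alpha$. Hence the Hölder seminorm of $u_0$ with exponent $\alpha$ is controlled by $\max(2\|u_0\|_\infty, L)$.

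Once this is established, I would invoke Lemma \ref{lem.Holder} directly with that chosen $\alpha$: the hypotheses $u_0 \in C^{0,\alpha}(\real)$ and $u(0,t) = K$ for all $t \geq 0$ are in place, so the lemma yields $u(\cdot,t) \in C^{0,\alpha}(\real)$ for any $t > 0$, with a Hölder constant independent of $t$. Since $\alpha$ was an arbitrary element of $(0,1)$, this proves the corollary.

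There is really no substantive obstacle here; the whole point of stating the corollary is to record the weakening of the hypothesis from a generic Hölder datum to the more commonly encountered Lipschitz one, and the only subtlety is noticing that the borderline exponent $\alpha = 1$ cannot itself be reached by this method — but it is also not claimed in the statement.
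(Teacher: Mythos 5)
Your argument is correct and coincides with the paper's: the remark preceding the corollary establishes that a bounded Lipschitz datum is $C^{0,\alpha}$ for every $\alpha\in(0,1)$ via the same two-case estimate, and the corollary then follows by applying Lemma \ref{lem.Holder} with that $\alpha$. Nothing further is needed.
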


As it was already established in our previous works \cite{IS13,
IS14}, the study of Eq. \eqref{eq1} is strongly based on some
transformations at the level of radially symmetric variables, which
lead to equations where the effects of the singular coefficient are
transformed into absorption or convection effects, which are better
understood. We recall the transformations adapted to our special
cases.

\medskip

\noindent \textbf{1. The quasilinear case $m>1$}. We write
\eqref{eq1} in radial variables as
$$
r^{-2}\partial_tu(r,t)=\partial_{rr}(u^m)(r,t)+\frac{N-1}{r}\partial_{r}(u^m)(r,t)
$$
and make the change of variable and function $u(r,t)=w(s,t)$,
$r=e^s$. Then, the new function $w$ solves either the standard
porous medium equation in one dimension, if the starting dimension
for Eq. \eqref{eq1} was $N=2$
\begin{equation*}
\partial_tw(s,t)=\partial_{ss}(w^m)(s,t), \qquad
(s,t)\in\real\times(0,\infty),
\end{equation*}
or a porous medium equation with convection also in one dimension,
\begin{equation*}
\partial_tw(s,t)=\partial_{ss}(w^m)(s,t)-\partial_{s}(w^m)(s,t),
\qquad (s,t)\in\real\times(0,\infty),
\end{equation*}
if the starting dimension for Eq. \eqref{eq1} was $N=1$. More
details about the transformation (in the general case) are given in
\cite[Section 3]{IS14}.

\medskip

\noindent \textbf{2. The linear case $m=1$}. In this case, the
transformation given in \cite[Section 3]{IS13} works similarly; more
precisely, letting
\begin{equation}\label{transf.lin}
u(r,t)=w(s,t), \qquad s=\log\,r+(N-2)t,
\end{equation}
it follows that $w$ is a solution to the heat equation in one
dimension:
\begin{equation}\label{HE}
\partial_tw(s,t)=\partial_{ss}w(s,t), \qquad
(s,t)\in\real\times(0,\infty).
\end{equation}
Through these transformations, we transform our problem into
equations for which most of their features are by now well
understood. Proofs of the main results thus come from arguments at
the level of the transformed equations \eqref{PME}, \eqref{PMEconv}
or \eqref{HE}. In the sequel, we will work with $m>1$ and reduce the
easy linear case $m=1$ to a final comment in Section
\ref{sec.final}.

\section{Large time behavior in dimension $N=2$}\label{sec.N2}

We establish the asymptotic behavior of solutions to the Cauchy
problem \eqref{eq1}-\eqref{id} for $N=2$ and $m>1$, thus proving
Theorems \ref{th.conv1} and \ref{th.conv2}. As usual, we consider
$u_0$ radially symmetric, so that the solution $u$ is radially
symmetric, well defined, bounded and continuous (according to
Definition \ref{def.sol} and Proposition \ref{prop.wp}). As
specified in the Introduction and precedent work \cite{IS14}, the
analysis is divided on whether $u_0(0)=0$ or $u_0(0)=K>0$.

\subsection{Large time behavior for data with
$u_0(0)=0$}\label{subsec.N20}

As explained in Section \ref{sec.prel}, by doing the transformation
$w(s,t)=u(r,t)$, where $r=e^{s}$, we obtain the porous medium
equation \eqref{PME} in $\real\times(0,\infty)$. Moreover, the
initial condition becomes $w_0(s)=u_0(r)$, which, taking into
account \eqref{int.cond}, satisfies
$$
\int_{\real}w_0(s)\,ds=\int_0^{\infty}\frac{w_0(\log\,r)}{r}\,dr=\int_0^{\infty}\frac{u_0(r)}{r}\,dr<\infty,
$$
whence $w_0\in L^1(\real)$. By standard results in the theory of the
porous medium equation (see for example \cite{V}), we know that
\begin{equation}\label{conv1}
\lim\limits_{t\to\infty}t^{\a}|w(s,t)-B_0(s,t)|=0,
\end{equation}
uniformly in $\real$, where $B_0(s,t)$ is the Barenblatt
(fundamental) solution to \eqref{PME} with the same total mass as
$w_0$, more precisely
$$
B_0(s,t)=t^{-\a}\left[C_0-k\left(\frac{s}{t^{\a}}\right)^2\right]_+^{1/(m-1)},
\qquad \alpha=\frac{1}{m+1}, \ k=\frac{m-1}{2(m+1)},
$$
and the parameter $C_0>0$ is uniquely determined by the fact that
$$
\int_{-\infty}^{\infty}B_{0}(s,t)\,ds=\int_{-\infty}^{\infty}w_0(s)\,ds.
$$
Undoing the previous transformation $s=\log\,r$ and passing to
initial variables, we get
$$
\lim\limits_{t\to\infty}t^{\a}|u(r,t)-U_0(r,t)|=0, \qquad
U_0(r,t)=t^{-\a}\left[C_0-k\left(\frac{\log\,r}{t^{\a}}\right)^2\right]_+^{1/(m-1)},
$$
uniformly for $r\in[0,\infty)$, proving thus Theorem \ref{th.conv1}.

\subsection{Large time behavior for data with
$u_0(0)=K>0$}\label{subsec.N2K}

In this case, things are a bit more complicated. By the same
transformation $w(s,t)=u(r,t)$, $r=e^s$, we arrive to Eq.
\eqref{PME} but with initial data $w_0$ such that
\begin{equation}\label{interm1}
\lim\limits_{s\to-\infty}w_0(s)=K>0, \qquad
\lim\limits_{s\to\infty}w_0(s)=0.
\end{equation}
We need thus to find the asymptotic profile of the porous medium
equation with such initial data, which is obviously non-integrable
in $\real$. To this end, we have a non-explicit candidate, in
self-similar form, given in \cite{AR84} and \cite[Section 4]{V84} as
a particular case (with $\a=0$ in their notation) of a more general
family of solutions. More precisely, it is shown that there exists a
self-similar solution of the form
\begin{equation}\label{specialSS}
W(s,t)=f(st^{-\beta}), \qquad \beta=\frac{1}{2},
\end{equation}
having as initial trace as $t\to0$, the Heaviside function
$$
W_0(x):=W(x,0)=\left\{\begin{array}{ll}1, \quad {\rm for} \ x\leq0, \\
0, \quad {\rm for} \ x>0,\end{array}\right.
$$
We next show that $W$ is the asymptotic profile as $s\to\infty$ for
solutions to \eqref{PME} with initial data as in \eqref{interm1}
with $K=1$, the result being then extended to general $K>0$ by a
rescaling.

\begin{proof}[Proof of Theorem \ref{th.conv2}] The desired large-time
behavior for Eq. \eqref{PME} with data as in \eqref{interm1} is
proved in \cite{vDPel}, with a technique involving somehow complex
gradient and integral estimates. We present (in a rather sketchy
form) an alternative proof which relies on the established
\emph{four step method} (see for example \cite{V}), along the lines
of the parallel result in \cite[Section 5]{IS14}. We divide it into
several parts and we pass quickly through the standard ones.

\medskip

\noindent \textbf{Step 1. Rescaling.} For $w$ solution to
\eqref{PME} with initial data as in \eqref{interm1}, define the
family of rescaled functions
\begin{equation}\label{resc}
w_{\lambda}(s,t):=w(\lambda^{1/2}s,\lambda t).
\end{equation}
It is immediate to check that $w_{\lambda}$ is again a solution to
\eqref{PME}.

\medskip

\noindent \textbf{Steps 2-3. Uniform estimates and passage to the
limit.} We want to get suitable uniform estimates in order to be
able to pass to the limit as $\lambda\to\infty$. This follows easily
from the well-established theory of Eq. \eqref{PME}. On the one
hand, by immediate comparison, we get
$$
|w_{\lambda}(s,t)|=|w(\lambda^{1/2}s,\lambda
t)|\leq\|w_0\|_{\infty},
$$
since constants are solutions to \eqref{PME}. On the other hand, an
uniform modulus of continuity for \emph{bounded} solutions to
\eqref{PME} is well-known, see \cite{DBF} and \cite[Chapter
7]{VazquezPME}. By Arzela-Ascoli theorem, there exists (along a
subsequence) a limit $W_{\infty}(s,t)$ of $w_{\lambda}$, as
$\lambda\to\infty$, with uniform convergence on compact sets in
$\real$. It is a standard step to prove (see for example
\cite[Chapter 18]{VazquezPME}) that $W_{\infty}$ is a weak solution
to Eq. \eqref{PME}, satisfying the same uniform bounds as the family
$w_{\lambda}$.

\medskip

\noindent \textbf{Step 4. Identification of the limit.} It remains
to show that $W_{\infty}=W$, where $W$ is given in
\eqref{specialSS}. To this end, following ideas in \cite[Section
5]{IS14}, we want to show that $W_{\infty}$ takes as initial trace
when $t\to0$ a Heaviside-type function, that means
$$
\lim\limits_{t\to0}W_{\infty}(s,t)=\left\{\begin{array}{ll}1, \qquad
{\rm for} \ s\leq0, \\ 0, \qquad {\rm for} s>0, \end{array}\right.
$$
in the sense of distributions, or equivalently,
\begin{equation}\label{interm2}
\lim\limits_{t\to0}\int_{-\infty}^{\infty}\left[W_{\infty}(s,t)\Phi(s)\,ds-\int_{-\infty}^0\Phi(s)\,ds\right]=0,
\end{equation}
for any test function $\Phi\in\cd(\real)$ (recall that we are
working for the moment with the assumption $K=1$). Let
$\Phi\in\cd(\real)$ and estimate
\begin{equation*}
\begin{split}
\left|\int_{-\infty}^{\infty}(w_{\lambda}(s,t)-w_{\lambda}(s,0))\Phi(s)\,ds\right|&=\left|\int_{-\infty}^{\infty}\int_0^t\partial_{t}w_{\lambda}(s,\theta)\Phi(s)\,d\theta\,ds\right|\\
&=\left|\int_{-\infty}^{\infty}\int_0^t\partial_{ss}(w_{\lambda}^m)(s,\theta)\Phi(s)\,d\theta\,ds\right|\\
&=\left|\int_0^t\left[\int_{-\infty}^{\infty}w_{\lambda}^m(s,\theta)\Phi''(s)\,ds\right]\,d\theta\right|\\
&\leq C\|w_0\|_{\infty}|{\rm supp}\,\Phi|\|\Phi''\|_{\infty}t\to0,
\qquad {\rm as} \ t\to0,
\end{split}
\end{equation*}
where by $|{\rm supp}\,\Phi|$ we denote the Lebesgue measure of the
(compact) support of the test function $\Phi$. We have thus shown by
now that
\begin{equation}\label{interm3}
\lim\limits_{t\to0}\int_{-\infty}^{\infty}w_{\lambda}(s,t)\Phi(s)\,ds=\int_{-\infty}^{\infty}w_{\lambda}(s,0)\Phi(s)\,ds,
\end{equation}
for any $\Phi\in\cd(\real)$ and $\lambda>0$, the convergence being
uniform with respect to $\lambda$ in any interval of the form
$[\lambda_0,\infty)$ with $\lambda_0>0$. It still remains to show
that
$$
\lim\limits_{\lambda\to\infty}\int_{-\infty}^{\infty}w_{\lambda}(s,0)\Phi(s)\,ds=\int_{-\infty}^0\Phi(s)\,ds,
$$
for any $\Phi\in\cd(\real)$. By splitting the integral, we obtain
\begin{equation*}
\begin{split}
\int_{-\infty}^{\infty}w_{\lambda}(s,0)\Phi(s)\,ds&=\left(\int_{-\infty}^{0}+\int_{0}^{\infty}\right)w_0(\lambda
s)\Phi(s)\,ds\\&=\int_{-\infty}^{0}\Phi(s)\,ds+\int_{-\infty}^{0}(w_0(\lambda
s)-1)\Phi(s)\,ds+\int_{0}^{\infty}w_0(\lambda
s)\Phi(s)\,ds\\&=T_1+T_2+T_3,
\end{split}
\end{equation*}
and the convergence of $T_2$ and $T_3$ to 0 as $\lambda\to\infty$
(even on subsequences) follows from the Lebesgue's Dominated
Convergence Theorem and the boundedness of $w_0$. We omit the
details, which are very similar to the ones in \cite[Section
5]{IS14}. By uniqueness, we obtain that $W_{\infty}\equiv W$, the
self-similar solution in \eqref{specialSS}, and that $w_{\lambda}\to
W$ as $\lambda\to\infty$ (that means, not only on a subsequence). We
have thus proved that
$$
\lim\limits_{\lambda\to\infty}|w_{\lambda}(s,t)-W(s,t)|=0,
$$
uniformly for $(s,t)\in K\times(0,\infty)$ with $K$ compact subset
of $\real$, or equivalently
$$
\lim\limits_{\lambda\to\infty}|w(\lambda^{1/2}s,\lambda
t)-W(s,t)|=0.
$$
Letting first $t=1$ fixed, then relabeling $\lambda=t$ and $s\mapsto
t^{1/2}s$, we obtain that
$$
\lim\limits_{t\to\infty}|w(s,t)-W(s,t)|=0,
$$
with uniform convergence on sets of the form $\{-Rt^{1/2}\leq s\leq
Rt^{1/2}\}$, for any $R>0$.

\medskip

\noindent \textbf{Step 5. General $K>0$}. Recall that, by easyness
of writing, all the previous analysis has been done under the
assumption that $K=1$. In order to pass to general $K>0$, let $w_0$
be an initial condition satisfying \eqref{interm1} and $w(s,t)$ be
the corresponding solution to \eqref{PME}. Define then
$$
z(s,t)=\frac{1}{K}w(K^{(m-1)/2}s,t),
$$
which is another solution to \eqref{PME}, but this time with
$z(s,t)\to1$ as $s\to-\infty$, for any $t\geq0$. We apply the
previous step for $z(s,t)$, then undo the rescaling to get that
$$
\lim\limits_{t\to\infty}|w(s,t)-KW(K^{-(m-1)/2}s,t)|=0,
$$
uniformly on on sets of the form $\{-Rt^{1/2}\leq s\leq Rt^{1/2}\}$,
for any $R>0$.

\medskip

\noindent \textbf{Step 6. Back to initial variables. Behavior at the
origin.} Undoing the transformation in Section \ref{sec.prel} and
getting back to radial variables $(r,t)$, we have shown that
\begin{equation}\label{interm4}
\lim\limits_{t\to\infty}\left|u(r,t)-Kf\left(K^{-(m-1)/2}\frac{\log\,r}{\sqrt{t}}\right)\right|=0,
\end{equation}
uniformly in sets of the form $\{e^{-R\sqrt{t}}\leq r\leq
e^{R\sqrt{t}}\}$, for any $R>0$. It remains to prove that this
convergence holds true uniformly in the whole $\real^2$. To this
end, we follow the same strategy as in \cite[Section 5, Steps 5 and
6]{IS14}, by showing first the following

\medskip \noindent \textbf{Claim:} The value at point $x=0$ doesn't change, that is $u(0,t)=K$
for any $t>0$, if $u_0(0)=K>0$.

\begin{proof}[Proof of the Claim] In order to simplify the notation, let
\begin{equation}\label{interm5}
W_K(r,t)=Kf\left(K^{-(m-1)/2}\frac{\log\,r}{\sqrt{t}}\right),
\end{equation}
with $f$ being the profile introduced in \eqref{specialSS}. Arguing
by contradiction, assume $u_0(0)=K>0$ and there exists $t_0>0$ such
that $u(0,t_0)=K_1\in(0,K)$ (if $K_1>K$ the argument is identical).
Then, we can start the evolution at $t=t_0$ and the solution to
\eqref{eq1} with initial data $u(x,t_0)$ is $u(x,t+t_0)$ by
uniqueness. Applying \eqref{interm4} for both $u$ and
$u(\cdot+t_0)$, we find that
$$
\lim\limits_{t\to\infty}\left|u(r,t)-W_K(r,t)\right|=0
$$
and at the same time
$$
\lim\limits_{t\to\infty}\left|u(r,t+t_0)-W_{K_1}(r,t)\right|=0,
$$
with uniform convergence in sets of the form $\{e^{-R\sqrt{t}}\leq
r\leq e^{R\sqrt{t}}\}$, for any $R>0$. But the two assertions above
are contradictory, as the profiles $W_K$ and $W_{K_1}$ are
essentially different in sufficiently large sets.
\end{proof}
Showing the uniform convergence in \eqref{interm4} in the whole
space becomes now a standard fact, as it is immediate for radially
non-increasing solutions (using Lemma \ref{lem.prel}), and it
extends to general data (and solutions) by comparison at the level
of transformed $(s,t)$ variables. Details are totally identical to
those in Big Step A and Step 6 in \cite[Section 5]{IS14}, to which
we refer the reader.
\end{proof}

\section{Large time behavior in dimension $N=1$}\label{sec.N1}

We deal here with the most interesting and novel behavior of the
present work, the large time behavior for $m>1$ and in dimension
$N=1$. As stated in the Introduction, evolution in dimension $N=1$
departs strongly from the one in dimension $N\geq2$, but in order to
be understood, it is at the level of the transformed equation
\eqref{PMEconv} where one should look for qualitative differences.

\subsection{Large time behavior for initial data with
$u_0(0)=0$}\label{subsec.N10}

The aim of this subsection is to prove Theorem \ref{th.conv3}. This
is now rather simple, in view of the work already done in
\cite[Section 4]{IS14} to which we refer the reader, so we will be
quite brief.
\begin{proof}[Proof of Theorem \ref{th.conv3}]
Let $u$ be a solution to \eqref{eq1} with initial condition $u_0$ as
in Theorem \ref{th.conv3}. We recall the transformation
$w(s,t)=u(x,t)$, $s=\log\,|x|$, which leads to Eq. \eqref{PMEconv}
solved by $w$. By \eqref{idminus}, we obtain that $w_0(s)\to0$ as
$s\to-\infty$ and $w_0\in L^1(\real)$, since
$$
\int_{-\infty}^{\infty}w_0(s)\,ds=\int_{0}^{\infty}\frac{u_0(x)}{x}\,dx=M_0<\infty.
$$
Let us also recall the following explicit profile for Eq.
\eqref{PMEconv}, obtained at the beginning of \cite[Section 4]{IS14}
(see also \cite{LS97})
\begin{equation}\label{dobleuve}
W(s,t)=\left\{\begin{array}{ll}t^{-1/m}\left[\frac{1}{m}st^{-1/m}\right]^{1/(m-1)},
& {\rm for} \ s\in[0,kt^{1/m}),\\ 0, & {\rm
otherwise},\end{array}\right.
\end{equation}
where the constant $k=k(M_0)$ from the branching point is uniquely
chosen such that
$$
\int_{-\infty}^{\infty}W(s,t)\,ds=\int_{0}^{kt^{1/m}}W(s,t)\,ds=M_0.
$$
Since $w_0\in L^1$ and has mass $M_0$, we are in the same conditions
as in \cite[Theorem 1.4]{LS98} (for the special case $q=m$ in their
notation), whence we deduce that, for any $p\in[1,\infty)$ we have
$$
\lim\limits_{t\to\infty}t^{(p-1)/mp}\|w(\cdot,t)-W(\cdot,t)\|_{p}=0,
$$
where $W$ is the profile defined in \eqref{dobleuve}. Going back to
initial variables $(x,t)$, we readily obtain the convergence in
\eqref{pconv}. The convergence in the sense of graphs \eqref{conv21}
follows now identically as in \cite[Proof of Theorem 1.4, p.
234]{IS14}, thus we omit it here.
\end{proof}

\subsection{Large time behavior for initial data with
$u_0(0)=K>0$}\label{subsec.N1K}

We let again $w(s,t)=u(x,t)$, $s=\log\,|x|\in\real$, and we arrive
to the diffusion-convection equation \eqref{PMEconv} solved by $w$.
Moreover, $w_0(s)=w(s,0)=u(e^s,0)$, thus \eqref{idplus1} implies
\begin{equation}\label{interm6}
\lim\limits_{s\to-\infty}w_0(s)=K>0, \qquad
\lim\limits_{s\to\infty}w_0(s)=0, \qquad 0\leq w_0(s)\leq K, \ {\rm
for \ any} \ s\in\real,
\end{equation}
and in fact, if we begin from compactly supported initial data
$u_0$, then $w_0$ has an interface on the right. In our previous
paper \cite{IS14}, we were dealing with apparently similar $w_0$ but
where the limit $K>0$ was taking place as $s\to\infty$. We show
here, as a consequence of our analysis, that these two situations
are \emph{strikingly different} at the level of Eq. \eqref{PMEconv}:
while in the latter, an \emph{asymptotic simplification} was taking
place (see \cite[Section 5]{IS14}), in the former there is no such
phenomenon and solutions joining the effect of both diffusion and
convection appear, in the form of \emph{traveling waves}. This is
intuitively explained by the following:

\noindent $\bullet$ When $w_0(s)\to 0$ as $s\to-\infty$ and
$w_0(s)\to K$ as $s\to\infty$, there is no mass coming from
$-\infty$; thus, the effect of the diffusion becomes secondary and
the convection gains, \cite{IS14}.

\noindent $\bullet$ When (as in our case), we have the reversed
limits as in \eqref{interm6}, there is always mass entering via the
diffusion process, whence both diffusion and convection will play a
role. This leads to the idea of traveling waves (fronts advancing to
the right).

We thus put $w(s,t)=f(s-ct)$ in Eq. \eqref{PMEconv}, $c>0$ being the
"speed". Then $f$ solves the following ODE:
\begin{equation*}
-cf(\xi)=(f^m)'(\xi)-f^m(\xi), \qquad \xi:=s-ct,
\end{equation*}
which can be explicitly integrated to find
$$
f(\xi)=\left[c-K_0e^{(m-1)\xi/m}\right]_+^{1/(m-1)},
$$
hence the explicit family of solutions to \eqref{PMEconv} subject to
conditions \eqref{interm6} is given by
\begin{equation}\label{TWs}
W_{s_0}(s,t)=\left[c-e^{(m-1)(s-s_0-ct)/m}\right]_+^{1/(m-1)},
\qquad c=K^{m-1},
\end{equation}
where $s_0\in\real$ is a free parameter. Let us also notice that
$W_{s_0}$ are ordered, in the sense that $-\infty<s_{0,1}\leq
s_{0,2}<\infty$ implies $W_{s_{0,1}}(s,t)\leq W_{s_{0,2}}(s,t)$.

We thus have a one-parameter family of candidate profiles,
presenting the expected behavior. In order to prove asymptotic
convergence (in the uniform sense) to one profile of the form
\eqref{TWs}, we have to pass first through the $L^1$ convergence.
Let us then proceed with the rigorous proof.
\begin{proof}[Proof of Theorem \ref{th.conv4}]
Let $u_0$ be a radially symmetric initial condition as in the
statement of Theorem \ref{th.conv4} and $u(x,t)$ be the
corresponding solution to \eqref{eq1}, in dimension $N=1$. Then
$u(x,t)$ is radially symmetric for any $t>0$, according to Lemma
\ref{lem.prel}. Applying the transformation $w(s,t)=u(r,t)$,
$s=\log\,r$, we find a solution $w(s,t)$ to Eq. \eqref{PMEconv},
whose initial condition $w_0$ satisfies \eqref{interm6}. We then
infer from \cite[Theorem B]{OR} and \cite[Theorem A]{vDdG} that
there exists a unique $s_0\in\real$ such that
\begin{equation}\label{interm16}
\lim\limits_{t\to\infty}\|w(\cdot,t)-W_{s_0}(\cdot,t)\|_{1}=0,
\end{equation}
where $s_0$ is the unique translation parameter such that
\begin{equation}\label{interm17}
\int_{-\infty}^{\infty}(w_0(s)-W_{s_0}(s,0))\,ds=0.
\end{equation}
Moreover, one can readily deduce that
\begin{equation}\label{interm20}
\lim\limits_{s\to-\infty}w(s,t)=K, \quad {\rm for \ any} \ t>0.
\end{equation}
This follows by a direct comparison argument that we only sketch.
Since $0\leq w_0(s)\leq K$ for any $s\in\real$, by comparison one
gets $w(s,t)\leq K$ for any $t>0$ and $s\in\real$. Fix now $\e>0$
small. One can readily find profiles of the form \eqref{TWs} with
speed $c_{-}=(K-\e)^{m-1}$, and translated with a very small
parameter $s_0$, lying below $w_0$. It then follows that
$$
K-\e\leq
\liminf\limits_{s\to-\infty}w(s,t)\leq\limsup\limits_{s\to-\infty}w(s,t)\leq
K, \qquad {\rm for \ any} \ t>0,
$$
which gives \eqref{interm20}.

Undoing the transformation and coming back to initial variables, we
get from \eqref{interm16}
\begin{equation}\label{convL1}
\lim\limits_{t\to\infty}\|u(\cdot,t)-U_{x_0}(\cdot,t)\|_1=0,
\end{equation}
where $U_{x_0}$ is defined in \eqref{profile22}, $c=K^{m-1}$ and
$x_0=e^{s_0}>0$, with $s_0$ introduced in \eqref{interm17}. Let us
recall that $U_{x_0}$ has been obtained previously in \cite[Section
7]{GKKV}, only as an example of explicit solution. In order to pass
now from the convergence in $L^1$ to the convergence in
$L^{\infty}$, we use the following result of real analysis, taken
from \cite[Lemma 2.8]{vDdG} (see also \cite[Lemma 3]{Pel71}):
\begin{lemma}\label{lem.passing}
Let $\Phi:[0,\infty)\mapsto[0,\infty)$ be a function satisfying:
$\Phi(0)=0$, $\Phi\in L^1([0,\infty))$ and $\Phi$ is uniformly
Holder continuous on $[0,\infty)$ with exponent $\alpha$ and
constant $H>0$. Then
\begin{equation}\label{L1Linfty}
\|\Phi\|_{\infty}\leq
H^{1/(1+\alpha)}\left(\frac{1+\alpha}{\alpha}\|\Phi\|_1\right)^{\alpha/(1+\alpha)}.
\end{equation}
\end{lemma}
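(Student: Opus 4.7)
The plan is to prove the inequality by exploiting the H\"older continuity of $\Phi$ to produce an explicit pointwise lower bound for $\Phi$ on an interval adjacent to a near-maximum, and then to compare this lower bound against $\|\Phi\|_1$. Geometrically, if $\Phi$ is almost equal to $M:=\|\Phi\|_{\infty}$ at some point $x_0$, H\"older continuity forces $\Phi$ to stay close to $M$ on an interval whose length is controlled by a power of $M/H$; integrating over this interval yields a power-type inequality of the form $\|\Phi\|_1\geq \frac{\alpha}{\alpha+1}\,M^{(\alpha+1)/\alpha} H^{-1/\alpha}$, which upon rearrangement gives exactly the bound in the lemma.

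More precisely, first I would set $M:=\sup_{x\geq 0}\Phi(x)$; we may assume $M>0$ and, although $M$ need not be attained, we may select a sequence $x_n\in[0,\infty)$ such that $\Phi(x_n)\to M$. For each $n$, the H\"older estimate yields
$$\Phi(x)\geq \Phi(x_n)-H(x-x_n)^{\alpha},\qquad x\geq x_n,$$
and setting $r_n:=(\Phi(x_n)/H)^{1/\alpha}$, the right-hand side is nonnegative on the interval $[x_n,x_n+r_n]\subset[0,\infty)$. Integration of this pointwise lower bound then gives
$$\|\Phi\|_1\geq \int_{x_n}^{x_n+r_n}\bigl(\Phi(x_n)-H(x-x_n)^{\alpha}\bigr)\,dx = \Phi(x_n)\,r_n-\frac{H}{\alpha+1}\,r_n^{\alpha+1}=\frac{\alpha}{\alpha+1}\,\Phi(x_n)^{(\alpha+1)/\alpha}\,H^{-1/\alpha}.$$

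The concluding step is to rearrange the estimate as $\Phi(x_n)^{(\alpha+1)/\alpha}\leq \frac{\alpha+1}{\alpha}\,H^{1/\alpha}\|\Phi\|_1$, pass to the limit $n\to\infty$ to replace $\Phi(x_n)$ by $M$, and then raise both sides to the power $\alpha/(\alpha+1)$ to recover exactly
$$\|\Phi\|_{\infty}\leq H^{1/(1+\alpha)}\left(\frac{1+\alpha}{\alpha}\,\|\Phi\|_1\right)^{\alpha/(1+\alpha)}.$$
There is no real obstacle here; the computation is elementary once the right picture (a H\"older ``cusp bound'' sitting beneath the graph near a near-maximum) is identified. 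The only subtlety is handling the possibility that $M$ is not attained, which is why one works with an approximating sequence rather than assuming a maximum exists; boundedness of $M$ itself, incidentally, follows a posteriori from the estimate, since an unbounded $\Phi$ would violate $\Phi\in L^1$. Let us remark that the hypothesis $\Phi(0)=0$ is not actually used in the one-sided integration to the right; a two-sided argument would use it to guarantee $x_n\geq r_n$ and thereby improve the constant from $\frac{1+\alpha}{\alpha}$ to $\frac{1+\alpha}{2\alpha}$, but the one-sided version is already sharp enough to deliver the stated inequality.
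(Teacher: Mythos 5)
Your proof is correct and complete. Note that the paper does not prove Lemma \ref{lem.passing} at all: it imports it from \cite{vDdG} (Lemma 2.8) and \cite{Pel71} (Lemma 3), and your argument is exactly the standard proof of that classical estimate --- the H\"older cusp $\Phi(x)\geq\Phi(x_n)-H(x-x_n)^{\alpha}$ below a near-maximum, one-sided integration over $[x_n,x_n+r_n]\subset[0,\infty)$ with $r_n=(\Phi(x_n)/H)^{1/\alpha}$, and the rearrangement that yields precisely the constant $\bigl(\tfrac{1+\alpha}{\alpha}\bigr)^{\alpha/(1+\alpha)}H^{1/(1+\alpha)}$; the computation checks out. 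Your side remarks are also accurate: finiteness of $\|\Phi\|_{\infty}$ follows a posteriori from the same estimate and $\Phi\in L^1$, and $\Phi(0)=0$ is not needed for the one-sided version (it would only enter a two-sided argument, where it guarantees $x_n\geq r_n$ via $\Phi(x_n)\leq Hx_n^{\alpha}$ and improves the constant).
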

Moreover, we get from \eqref{interm20} that $u(0,t)=K$ for any
$t>0$. Since by \eqref{idplus} $u_0\in C^{0,\alpha}(\real)$,
$\alpha\in(0,1]$, we readily deduce from Lemma \ref{lem.Holder} and
Corollary \ref{cor.Lip} that the function
$\Phi(x):=|u(x,t)-U_{x_0}(x,t)|$ satisfies the conditions in Lemma
\ref{lem.passing} for any $t>0$. Joining \eqref{L1Linfty} and
\eqref{convL1}, we readily infer the uniform convergence
\eqref{conv22}, ending the proof.
\end{proof}
We end this subsection with a numerical experiment illustrating the
evolution towards translated profiles of the type $U_{x_0}(x,t)$, as
stated by Theorem \ref{th.conv4} and just proved above. In Figure
\ref{figure1} we show in parallel a generic solution to Eq.
\eqref{eq1} (taking for the experiment $u_0(x)=0.1(0.5-x^2)_+$),
respectively the corresponding explicit asymptotic profile.
\begin{figure}[ht!]
  \begin{center}
  \includegraphics[width=15cm,height=10cm]{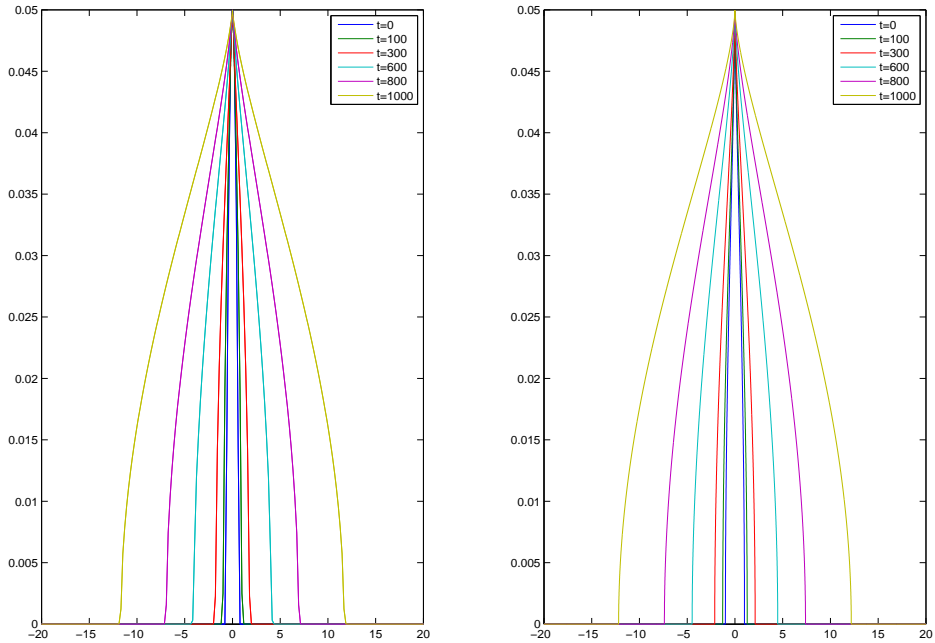}
  \end{center}
  \caption{A generic solution and the asymptotic profile for Eq. \eqref{eq1} in dimension $N=1$, with $m=3$.} \label{figure1}
\end{figure}

\section{Further extensions, comments and open
problems}\label{sec.final}

In this final section we extend some results and state some open
problems related to the present work. In particular, we discuss the
large time behavior for non-radial initial data in dimension $N=1$.

\medskip

\noindent \textbf{1. The linear case $m=1$.} This case is easy,
since it can be directly transformed into the standard Heat Equation
\eqref{HE} by the change of variable \eqref{transf.lin}. Thus, we
get from \cite{IS13} that the large time behavior is given by the
same asymptotic profiles as there, but in our case with
$N\in\{1,2\}$, namely (omitting the exact regularity assumptions on
$u_0$, which are the same as in \cite[Theorems 1.1 and 1.2]{IS13})

\medskip

\noindent $\bullet$ If $u_0(0)=0$, we have
\begin{equation}\label{asymptlin1}
\lim\limits_{t\to\infty}t^{1/2}\left\|u(\cdot,t)-\frac{M_{u_0}}{\omega}F(\cdot,t)\right\|_{\infty}=0,
\end{equation}
where $\omega=2\pi$ if $N=2$ and $\omega=1$ if $N=1$,
\begin{equation*}
M_{u_0}:=\int_{\real^N}|x|^{-N}u_0(x)\,dx<\infty,
\end{equation*}
and
\begin{equation}\label{profilelin1}
F(x,t):=\left\{\begin{array}{ll}\frac{1}{\sqrt{4\pi
t}}G\left(\frac{\log|x|+(N-2)t}{2\sqrt{t}}\right), \quad
G(\xi)=e^{-\xi^2}, & {\rm for} \ |x|\neq0, \\0, & {\rm for} \
x=0,\end{array}\right.
\end{equation}

\medskip

\noindent $\bullet$ If $u_0(0)=K>0$, we have
\begin{equation}\label{asymptlin2}
\left\|u(\cdot,t)-\frac{K}{2}E(\cdot,t)\right\|_{\infty}=O(t^{-1/2}),
\quad {\rm as} \ t\to\infty,
\end{equation}
where
\begin{equation}\label{profilelin2}
E(x,t):=\left\{\begin{array}{ll}{\rm
erfc}\left(\frac{\log|x|+(N-2)t}{2\sqrt{t}}\right), \quad {\rm
erfc}(\xi)=\frac{2}{\sqrt{\pi}}\int_{\xi}^{\infty}e^{-\theta^2}\,d\theta, & {\rm for} \ |x|\neq0,\\
K, & {\rm for} \ x=0.\end{array}\right.
\end{equation}

These statements are given here only at a formal level, for a
rigorous analysis see \cite{IS13}.

\medskip

\noindent \textbf{2. Non radially symmetric solutions.} This is a
natural question, as on the one hand, our techniques (specially in
dimension $N=2$) depend strongly on the symmetry of the solutions,
and on the other hand, in the linear case the authors were able to
show that the evolution do not symmetrize in the large time behavior
(see \cite[Introduction]{IS13} for a counterexample). In
\textbf{dimension $N=1$}, one can get a \textbf{full result for
general data $u_0$} by noticing that the origin is disconnecting the
real line. Thus, if $u_0$ is a general data, we can define the
radially symmetric data
$$
u_0^{+}(x)=\left\{\begin{array}{ll}u_0(x), & {\rm for} \ x\geq0, \\
u_0(-x), & {\rm for} \ x<0, \end{array}\right. \qquad
u_0^{-}(x)=\left\{\begin{array}{ll}u_0(x), & {\rm for} \ x\leq0, \\
u_0(-x), & {\rm for} \ x>0, \end{array}\right.
$$
and respectively, the corresponding radially symmetric solutions
$u^{+}(x,t)$, $u^{-}(x,t)$. To give an example, suppose that
$u_0(0)=K>0$ (the case $u_0(0)=0$ is similar), then
$u_0^+(0)=u_0^-(0)=K$, whence, by Theorem \ref{th.conv4}, there
exist $x_0^+$ and $x_0^-\in\real$ such that
\begin{equation}\label{interm18}
\lim\limits_{t\to\infty}\|u^+(\cdot,t)-U_{x_0^+}(\cdot,t)\|_{\infty}=\lim\limits_{t\to\infty}\|u^-(\cdot,t)-U_{x_0^-}(\cdot,t)\|_{\infty}=0.
\end{equation}
Moreover, since $u(0,t)=K$ for any $t>0$, solving the Cauchy problem
with $u_0$ reduces to solving two Dirichlet problems on the two
half-lines with boundary data $K>0$ and then matching the two
solutions. This shows that
\begin{equation}\label{interm19}
u(x,t)=\left\{\begin{array}{ll}u^+(x,t), & {\rm for} \
x\geq0,\\u^-(x,t), & {\rm for} \ x<0,\end{array}\right.
\end{equation}
for any $(x,t)\in\real\times(0,\infty)$. We thus infer from
\eqref{interm18} and \eqref{interm19} that
\begin{equation}\label{conv.nonradial}
\lim\limits_{t\to\infty}\|u(\cdot,t)-U(\cdot,t)\|_{\infty}=0, \qquad
U(x,t)=\left\{\begin{array}{ll}U_{x_0^+}(x,t), & {\rm for} \
x\geq0,\\U_{x_0^-}(x,t), & {\rm for} \ x<0,\end{array}\right.
\end{equation}
which is the desired large time behavior.

However, in dimension $N=2$ the previous argument does not work, and
we do not have for the moment an answer to the question whether
general data also lead to radially symmetric profiles or not. We
leave this as an \emph{open problem}. Our conjecture is that
non-radial asymptotic behavior should appear, in line with
\cite{IS13}.

\medskip

\noindent \textbf{3. Open problems for other $\gamma$ and $m$}. In
the papers \cite{KR04, R02}, a connection between equations such as
\eqref{eq1} in dimension $N=1$ (but with general density
$\varrho(x)=|x|^{-\gamma}$ for $\gamma=(3m+1)/2m$) and some
Fisher-KPP type equations is done. As a byproduct, the authors
deduce that the large time behavior in their cases is given by
self-similar, \emph{dipole-type} profiles. This is in itself a very
interesting result, but does not interact with ours, since
$\gamma=2$ implies $m=1$, while the transformation in \cite{KR04}
only applies to $m>1$. However, having these works as starting
point, we raise the open problem of studying the large time behavior
for the general form of \eqref{eq1}, that is
$$
|x|^{-\gamma}\partial_tu(x,t)=\Delta u^m(x,t),
$$
where the most interesting case is when $2<\gamma$ and $m>1$. It
seems to us that for the general case $(\gamma,m)$, there is no
useful mapping to another well-known equation; such mappings may
exist, but only for some special relations between $\gamma$ and $m$.


\section*{Acknowledgments} R. G. I. is partially supported by the
Spanish project MTM2012-31103 and the Severo Ochoa Excellence
project SEV-2011-0087. A. S. is partially supported by the Spanish
project MTM2014-53037-P, and he gratefully acknowledge financial
support from Universidad Rey Juan Carlos-Banco de Santander
Excellence group QUINANOAP. Both authors want to thank Prof.
Philippe Lauren\ced{c}ot for useful suggestions and for indicating
some important references.

\bibliographystyle{plain}

\end{document}